\documentclass[10pt, a4paper]{article}
\usepackage[top=3cm, bottom=3cm, left=3cm, right=3cm]{geometry}
\date{}
\usepackage[utf8]{inputenc}
\usepackage[english]{babel}
\usepackage{amsthm}
\usepackage{amsmath}
\usepackage{amsfonts}
\usepackage[onehalfspacing]{setspace}
\usepackage{amssymb}
\usepackage{bbold}
\newtheorem{theorem}{Theorem}[section]
\usepackage{amsthm}
\usepackage{mathtools}
\usepackage{bbm}
\usepackage{url}

\def\wh{\widehat}
\def\wt{\widetilde}

\newtheorem{lemma}{Lemma}[section]

\newtheorem{proposition}{Proposition}[section]

\newtheorem*{theorem*}{Theorem}
\theoremstyle{remark}
\newtheorem*{remark}{Remark}

\newtheorem*{lemma*}{Lemma}
\newtheorem*{proposition*}{Proposition}
\usepackage{tikz-cd}
\usepackage{mathrsfs}
\usepackage{hyperref}
\usepackage{fancyhdr}
\theoremstyle{remark}

\title{The algebraicity of generating functions using nested Artin approximation}

\begin{document}
\author{Gregor Böhm\footnote{Supported by the Austrian Science Fund, FWF, Project PAT4965924\\The author is grateful to Herwig Hauser for insightful suggestions and stimulating conversations. Furthermore we would like to thank Manuel Kauers for helpful discussions. During the preparation of this text, the author used Meta AI (Muse Spark 1.1, September 2026) to improve readability, refine formulations, and enhance the academic tone of the text.}}
\maketitle
\begin{abstract}
   We present an elementary proof of the bivariate nested Artin-Popescu approximation theorem: it ensures the existence of a nested algebraic power series solution of a given polynomial (functional) equation. Such equations often appear in enumerative combinatorics, e.g., when counting lattice walks in the first quadrant. The ad hoc proofs of the algebraicity of the generating functions, as e.g. proposed by Bousquet-Mélou and Jehanne in \cite{bousquetmélou2005polynomialequationscatalyticvariable}, can thus be replaced by applying the theorem without need to resort to the extremely difficult general case solved by Popescu.
\\Our method of proof follows closely the arguments of Denef-Lipshitz in \cite{Denef1980},
where a more general case is treated (namely, the Artin approximation for Weierstrass systems). This is combined with the techniques of Hauser-Woblistin developed in \cite{Hauser_Woblistin_2021} for the description of the overall geometry of the infinite dimensional variety formed by all power series solutions. Putting both approaches together now provides combinatorialists with an accessible proof for the bivariate nested Artin-Popescu approximation theorem.
\end{abstract}
\section{Introduction}
Let $\mathbb{K}$ be a field of characteristic zero. The theorem for which this paper gives a proof is the following:
\begin{theorem}\label{thm1}
    Let $P(t,x,y,z)=0$ be a polynomial equation in some single variables $t,x,z,y$ with a nested formal power series solution $(\wh{y}(t),\wh{z}(t,x)).$ Assume that the partial derivative of $P$ with respect to $z$ satisfies $$\frac{\partial P}{\partial z}(t,x,\wh{y}(t),\wh{z}(t,x)) \not\equiv 0 \bmod (t),$$ i.e., it has a monomial not depending on the variable $t$. Then there exists an algebraic power series solution $(y(t),z(t,x)),$ $$P(t,x,y(t),z(t,x))=0,$$ with the same nestedness property.
    \\In particular, if $P=0$ has a unique solution this solution is already algebraic.
\end{theorem}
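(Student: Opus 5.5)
We will reduce the nested problem to the classical Artin approximation over the one-variable ring $\mathbb{K}[[t]]$ by parametrizing the $x$-dependence of $z$ via the implicit function theorem, then apply ordinary one-variable Artin approximation (or its elementary Greenberg/Denef--Lipshitz version for $\mathbb{K}\langle t\rangle$, the Henselization of $\mathbb{K}[t]_{(t)}$).

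\textbf{Step 1 (normalization).} Write $\wh{y}(t)=\sum_k \wh{y}_k t^k$ and $\wh{z}(t,x)=\sum_k \wh{z}_k(x)t^k$ with $\wh{z}_k\in\mathbb{K}[[x]]$. Set $c=\wh{y}(0)$ and $z_0(x)=\wh{z}(0,x)$. The hypothesis says $\partial_z P(0,x,c,z_0(x))\neq 0$ in $\mathbb{K}[[x]]$, say of $x$-order $e$. We will arrange that the relevant derivative becomes a unit by a Tougeron-type substitution. Put $z=z^{(N)}(t,x)+x^{2e}w$ with $z^{(N)}$ a polynomial truncation of $\wh z$ in $x$. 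Then Taylor expansion gives
\[
P(t,x,y,z^{(N)}+x^{2e}w)=P(t,x,y,z^{(N)})+x^{2e}w\,\partial_zP(t,x,y,z^{(N)})+x^{4e}w^2Q .
\]
For $N$ large, and $y$ close to $\wh y$ in the $t$-adic topology, $\partial_zP(\dots)=x^e u$ with $u$ a unit in $\mathbb{K}[[t,x]]$. After dividing by $x^{2e}u$ (which requires the constant term to be divisible), we obtain an equation $G(t,x,y,w)=0$ with $\partial_w G(0,0,c,\cdot)$ a unit.

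\textbf{Step 2 (reduction to a system in $t$ only).} The divisibility condition
\[
P(t,x,y,z^{(N)})\in x^{3e}\,\mathbb{K}[[t,x]]
\]
is handled by Weierstrass division in $x$ by $x^{3e}$. Here the coefficients of $z^{(N)}$ in $x$ are unknowns $a_0(t),\dots,a_{N}(t)\in\mathbb{K}[[t]]$. The remainder of $P(t,x,y,\sum a_ix^i)$ upon division by $x^{3e}$ has coefficients that are polynomials in $t,y,a_i$ (division by a monomial is just truncation, so everything stays polynomial). Thus the condition is a finite polynomial system $F_j(t,y,a)=0$ in one variable $t$ and the unknown series $y,a_i$. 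This system has the formal solution $(\wh y,\wh z_{\le N})$, where $\wh z_{\le N}$ denotes the coefficients of $x^0,\dots,x^N$ of $\wh z$.

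\textbf{Step 3 (approximation).} By one-variable Artin approximation, which is elementary for $\mathbb{K}[[t]]$ (a DVR: reduce to the smooth case via Jacobian/Tougeron ideal and Hensel), we get algebraic $y(t),a_i(t)$ agreeing with the formal ones to high $t$-order $c'$. This agreement keeps $\partial_zP(t,x,y,z^{(N)})=x^eu$ with $u$ a unit, since that property only depends on low order terms in $t$ and $x$. Then $G(t,x,y(t),w)=0$ is an equation over the algebraic series ring $\mathbb{K}\langle t,x\rangle$ whose $w$-derivative is a unit at a point where $G\equiv 0$ modulo $(t,x)$. Hence the implicit function theorem for algebraic power series yields an algebraic $w(t,x)$, and $z=z^{(N)}+x^{2e}w$ is algebraic. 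Uniqueness: if the solution is unique, it must coincide with this algebraic one.

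\textbf{Main obstacle.} The delicate point is Step 1/3 compatibility: ensuring that $G(0,0,c,w_0)=0$ has a solution $w_0$ with unit derivative after replacing $\wh y$ by the approximation. The hypothesis is only mod $t$, so the order $e$ in $x$ must be controlled uniformly. I would first fix $N>3e$ and $c'\ge 1$ using the fact that $x$-order of $\partial_zP$ mod $t$ is determined by $y\bmod t$ and $z^{(N)}\bmod t$. The value $w_0$ is then obtained from the original formal solution's $w$, which solves $G$ formally.
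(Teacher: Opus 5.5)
There is a genuine gap in Step 1, and Steps 2--3 depend on it. The hypothesis only says that $\partial_zP(t,x,\wh y,\wh z)$ is $x$-regular of order $e$. By Weierstrass preparation it equals a unit times a distinguished polynomial $\wh\Delta=x^e+\wh\delta_{e-1}(t)x^{e-1}+\cdots+\wh\delta_0(t)$ with $\wh\delta_i\in t\,\mathbb{K}[[t]]$, and the $\wh\delta_i$ are in general nonzero. So your claim $\partial_zP(t,x,y,z^{(N)})=x^eu$ with $u$ a unit is false, and no choice of $N$ or $y$ fixes it. The paper's own Dyck example refutes it. For $P=x+tx^2z+t(z-y)-xz$ one has $\partial_zP=tx^2+t-x$. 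This is independent of $y,z$ and $x$-regular of order $1$, but its $x^0$-coefficient is $t\neq0$; it vanishes along the kernel root $x=t+O(t^3)$, not along $x=0$. After your substitution $z=z^{(N)}+x^{2e}w$, the linear term is $x^{2e}w\,\partial_zP$, and $x^{3e}$ cannot be factored out of it. So no equation $G$ with unit $w$-derivative arises, and Step 3 has nothing to apply the implicit function theorem to. There is a second, independent problem. Even when $x^e$ does divide $\partial_zP$ at the formal point, divisibility is a closed condition involving all $t$-coefficients, not something fixed by low-order terms; you are conflating it with $x$-regularity. It would not survive $t$-adic approximation unless you imposed it as extra equations.

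Your strategy can be repaired on its own terms by replacing $x^e$ with $\Delta=x^e+\sum_{i<e}\delta_i(t)x^i$ and treating the $\delta_i$ as additional unknowns. Divide $\wh z$ by $\wh\Delta^2$ to get $\wh z=\wh r+\wh\Delta^2\wh w$ with $\wh r=\sum_{i<2e}\wh a_i(t)x^i$. Then $\partial_zP(t,x,\wh y,\wh r)$ is $\wh\Delta$ times a unit, and Taylor expansion of $P(t,x,\wh y,\wh z)=0$ gives $\wh\Delta^3\mid P(t,x,\wh y,\wh r)$. Since $\Delta$ is monic in $x$ and $P$ is a polynomial, the conditions $\Delta\mid\partial_zP(t,x,y,r)$ and $\Delta^3\mid P(t,x,y,r)$ amount to vanishing remainders of polynomial long division. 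That is a finite polynomial system in $(t,y,a,\delta)$ with formal solution $(\wh y,\wh a,\wh\delta)$. Approximate it via Theorem~\ref{ArtinAprox} and write $P(t,x,y,r+\Delta^2w)=\Delta^3(B+Uw+\Delta w^2T)$ with $U(0,0)\neq0=\Delta(0,0)$ (for $e\geq1$). The implicit function theorem then gives an algebraic $w$.

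This is essentially the paper's mechanism. The paper divides $\wh z$ by $M=\partial_zQ(t,x,\wh z)$ itself and absorbs the remainder coefficients into auxiliary series $\wh u(t)$. It works in $\mathbb{K}[[t\vert x\rangle$, where Weierstrass division and the implicit function theorem are available for such non-monomial $x$-regular elements. Only then does it reduce to univariate Artin approximation, using Noetherianity to cut the $H_i$ down to finitely many. The paper's write-up also slips into writing $M(t,x,\wh r)=x^d\wt M$. There the fix is simply to divide by powers of $M$ inside $\mathbb{K}[[t\vert x\rangle$, whereas in your argument the monomial is essential, since it is what makes Step 2 a mere truncation. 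Your repaired route would avoid the auxiliary ring and the Noetherian reduction, at the cost of the extra unknowns $\delta_i$.
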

       Various generalizations and extensions are possible: 
        \begin{itemize}
        \item First, $y$ and $z$ may be multivariables. If $x$ is a single variable and one has the condition $$\frac{\partial P}{\partial z}(t,x,\wh{y}(t),\wh{z}(t,x)) \not\equiv 0 \bmod (t)$$ $t$ may be a multivariable as well. Conversely, if $x$ is a multivariable, the proof technique requires $t$ to be a single variable with the extra ingrediant of the classical N\'{e}ron desingularization (see e.g. \cite{strongartin1}, \cite{Denef1980}). This machinery is also required if one drops the assumption $\frac{\partial P}{\partial z}(t,x,\wh{y}(t),\wh{z}(t,x)) \not\equiv 0 \bmod (t)$.
        \item One may require that the algebraic solution approximates the formal one to an arbitrary prescribed degree, yielding the so-called approximation property.\\
        \item The single polynomial $P$ can be replaced by a vector of polynomials (or even algebraic power series), with essentially the same proof, but more technicalities. We refer to section \ref{sec gen} for more details on all this. \\
         \item If one wants to generalize the problem to an arbitrary number of nests with arbitrary size this culminates in the theorem of Popescu on nested Artin Approximation using his deep structure theorem about regular homomorphisms between noetherian rings \cite{popescu_1986}, \cite{Spivakovsky}, \cite{swan}, \cite{OGOMA199457}, \cite{stacks-projectpop}, \cite{SB_1993-1994__36__259_0}.
        \item In many applications (see e.g. \cite{bousquetmélou2005polynomialequationscatalyticvariable}), the components $y_i(t)$ of the formal and algebraic solutions $y(t)=(y_0(t),\ldots,y_m(t))$ are the first $m$ coefficients of the Taylor expansion at zero of $z(t,x)$ as a series of $x$. This is an extra condition which is covered by the theorem by adding additional equations.\\
        \end{itemize}
  We emphasize that the proof given in this article is mostly extracted from Denef-Lipshitz' paper \cite{Denef1980}, using also techniques from \cite{Hauser_Woblistin_2021}, while omitting all complications and restricting always to the simplest but still relevant case. In their paper Denef and Lipshitz actually treat the case where $P$ is convergent and the solutions are asked to be convergent as well. However, the proof of the algebraic case is essentially the same using the algebraic Weierstrass division instead of the convergent Weierstrass division.\\ 
  \\The strategy of the proof we present is tricky. Given the formal solution $\wh{y}(t)$ and $\wh{z}(t,x)$, one first concentrates on the bivariate second component $\wh{z}(t,x)$, while treating the univariate $\wh{y}(t)$ as a stowaway. By an argument proposed by Denef and Lipshitz one is then able to eliminate completely the $x$-variable from the problem, at the cost of introducing auxiliary formal series $\wh{u}(t)=(\wh{u}_1(t),...,\wh{u}_s(t))$ and a whole new system of algebraic equations for the series $\wh{y}(t)$ and $\wh{u}(t)$. But, in this case, the classical univariate Artin approximation theorem, say, Greenberg's theorem \cite{Greenberg1966}, applies and ensures the existence of algebraic solutions. Resubstitution of the algebraic solutions $y(t)$ and $u(t)$ into the original problem then allows one to reconstruct the second component $z(t,x)$ of the solution as an algebraic series, and the pair $(y(t), z(t,x))$ gives the desired total algebraic nested solution of $P(t,x,y,z) = 0$.\\

\section{An introductory example}
In this section, we give, in line with \cite{bousquetmélou2005polynomialequationscatalyticvariable}, an easy example of a classical generating functon that arises in combinatorics for which Theorem\ref{thm1} gives algebraicity: Generalized Dyck paths. We want to count the number of walks in $\mathbb{N}^2$ with $n$ steps of the form $(1,1)$ and $(1,-1)$. Let $d_{n,k}$  be the number of such walks with $n$ steps ending in $(n,k)$ and let $z(t,x)\coloneqq\sum d_{n,k} t^n x^k$ be its generating function. One easily sees that the generating function satisfies the equation
\begin{equation}\label{dyck}   
z(t,x)=1+txz(t,x)+\frac{t}{x}\left(z(t,x)-z(t,0)\right),\end{equation}
One arrives at this formula by considering the last step of a walk. The empty path contributes the $1$. If the last step taken was $(1,1)$ one gets the second term. The third term arises from the case where the last step is $(1,-1)$ taking into account that at this point $k$ cannot be zero. Furthermore, the case $k=0$ is well known as the Dyck paths with endpoint on the $x$-axis and thus $d_{2n,0}$ is the $n$-th Catalan number. Once again considering the cases of the last step and solving the arising functional equation yields $$z(t,0)=\frac{1-\sqrt{1-4t^2}}{2t^2}.$$ This is clearly an algebraic function as it satisfies the equation
$$4t^4z(t,0)^2-4t^2z(t,0)+4t^2=0.$$ However, one is interested whether $z(t,x)$ is algebraic as well. Theorem \ref{thm1} shows that this is the case: Rewriting equation (\ref{dyck}) yields that $z(t,x)$ and $y(t)=z(t,0)$ is a nested solution of $P=0$ for the equation $$P(t,x,y,z)=x+tx^2z+t(z-y)-xz=0.$$ Now notice that for any other nested solution $\widetilde{z}(t,x),\widetilde{y}(t)$ setting $x=0$ yields $\widetilde{y}(t)=\widetilde{z}(t,0)$. Then $z(t,x),z(t,0)$ is the unique nested solution by construction. Furthermore $\frac{\partial P}{\partial z}=tx^2+tz-x$ and then egardless of the form of $z(t,x)$ the term $tx^2+tz(t,x)$ has at least order $1$ in $t$ and cannot cancel the $x$ term. Thus, the assumption of Theorem \ref{thm1} is satisfied and $z(t,x)$ is algebraic. 
\\This is a special instance of a so-called polynomial equation in one catalytic variable $x$ (in the notation of \cite{ZEILBERGER2000451}). Theorem \ref{thm1} yields algebraicity of the solution of all such polynomial equations in one catalytic variable. For more concrete examples see e.g. \cite{bousquetmélou2005polynomialequationscatalyticvariable}.
\section{The proof of Theorem \ref{thm1}}\label{sec2}
Denote by $\mathbb{K}\langle x\rangle$ the ring of algebraic power series over $\mathbb{K}$ (i.e., the power series $f\in \mathbb{K}[[x]]$ that satisfy a nonzero polynomial equation $P(f(x))=0$, for $P\in\mathbb{K}[x,y]$). Furthermore, we will introduce another notation: For an algebraic equation $P(t,x,y,z)=0$ we will write $\wh{z}(t,x)$ and $\wh{y}_0(t),\ldots,\wh{y}_m(t)$ for a given formal nested solution to distinguish it from later found algebraic solutions $z(t,x)$ and $y_0(t),\ldots,y_m(t).$ We will furthermore repeatedly use the Weierstrass Division, the Implicit Function Theorem and the notion of $x$-regularity in the rest of this note. For the exact statements, see the \nameref{apendix}. 
\\Before we give the proof of Theorem \ref{thm1} we will state and prove a lemma that shows a basic fact which we will use twice in the rest of the paper:
\begin{lemma}[Hauser-Woblistin \cite{Hauser_Woblistin_2021}]\label{ideale}
    Let $g(x,y)\in\mathbb{K}[[x,y]]$ be a formal power series in two sets of variables $x=(x_1,\ldots,x_n)$ and $y=(y_1,\ldots,y_m)$. Let $y(x)\in\mathbb{K}[[x]]^m$ be a formal power series such that $$g(x,y(x))\neq0$$ and assume it is $x_n$-regular of order $d$.
    Divide formally $$y(x)=v(x)\cdot g(x,y(x))+r(x),$$ for some divisor $v\in\mathbb{K}[[x]]^m$ and remainder $r\in\mathbb{K}[[x_1,\ldots,x_{n-1}]][x_n]_{\leq d}^m.$ Then the two ideals $I,J$ generated by $g(x,y(x))$ and $g(x,r(x))$ respectively are the same. In particular, $y(x)$ can be written as $$y(x)=w(x)g(x,r(x))+r(x)$$ for some $w\in\mathbb{K}[[x]]^m.$ 
\end{lemma}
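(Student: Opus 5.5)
The plan is to compare $g(x,y(x))$ and $g(x,r(x))$ through a first-order Taylor expansion of $g$ in the $y$-variables around $r(x)$. The Weierstrass division is taken with respect to the $x_n$-regular series $h:=g(x,y(x))$, applied to each component of $y(x)$, so that $y(x)-r(x)=v(x)h$. Taylor's formula in the $y$-variables gives
\begin{equation*}
g(x,y)-g(x,r)=\sum_{i=1}^m (y_i-r_i)\,G_i(x,y,r)
\end{equation*}
for suitable formal power series $G_i\in\mathbb{K}[[x,y,r]]$. This is the standard fact that $g(x,y)-g(x,r)$ lies in the ideal generated by the differences $y_i-r_i$ in $\mathbb{K}[[x,y,r]]$. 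It holds for power series and not only for polynomials, because one can substitute $y=r+(y-r)$ and expand.

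Next I substitute $y=y(x)$ and $r=r(x)$. This substitution is legitimate only if $r(0)=0$, or more generally if $r$ takes the same constant term as $y$; otherwise one should expand around the constant terms. To avoid this issue I will read the composite $g(x,y(x))$ throughout under the paper's implicit assumption that $y(0)=0$. Since $r$ is the remainder of $y$ and $y-r=vh$ with $h(0)=0$, we get $r(0)=y(0)=0$, so the substitution makes sense. It yields
\begin{equation*}
g(x,r(x))=h-\sum_i v_i(x)h\,G_i(x,y(x),r(x))=h\,\bigl(1-\textstyle\sum_i v_iG_i\bigr)=:h\cdot u(x).
\end{equation*}
This already gives $J\subseteq I$.

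For the reverse inclusion I need $u$ to be a unit. Here $v_i(0)$ need not vanish, so I cannot conclude directly; this is the main obstacle. I would use regularity instead. The series $h$ is $x_n$-regular of order $d$, and $g(x,r(x))=h\,u$. I would compare orders in $x_n$ after setting $x_1=\dots=x_{n-1}=0$. Write $\bar f$ for the restriction of $f$ to $x_1=\dots=x_{n-1}=0$. Since $\bar r$ is a polynomial in $x_n$ of degree less than $d$, I expect $\bar g(x_n,\bar r)$ to have $x_n$-order exactly $d$ when $u(0)\neq 0$. If instead $u(0)=0$, the order of $\overline{hu}$ would exceed $d$, and I must exclude this possibility.

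To exclude it, I would argue via the division uniqueness. Since $\bar y-\bar r=\bar v\bar h$ and $\bar h$ has order $d$, we have $\bar y\equiv\bar r \bmod x_n^d$. Hence $\bar g(x_n,\bar y)\equiv\bar g(x_n,\bar r)\bmod x_n^d$. This only gives order at least $d$, so I would refine the Taylor step to $\bar G_i$ modulo $x_n$. The cleaner alternative is to swap the roles: apply the same Taylor identity with $y$ expressed via $r$, i.e. $h=g(x,r)+\sum_i(y_i-r_i)G_i$. This shows $h\in (g(x,r))+(h)\cdot\mathfrak m$-type terms, and I would try to conclude with a Nakayama-style argument that $I=J$ as soon as $I\subseteq J+\mathfrak m I$.

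The honest crux is therefore establishing that $u$ is a unit, possibly using that $v$ can be chosen with $v(0)$ suitable, or using a hypothesis implicit in the regularity. Once $I=J$, writing $h=w'\,g(x,r)$ with $w'$ a unit and setting $w=v w'$ gives the final formula $y=w\,g(x,r)+r$.
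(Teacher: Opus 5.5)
\textbf{There is a genuine gap, and it cannot be closed, because the statement is false as written.}

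Your first step is exactly the paper's proof. Taylor expansion in the $y$-variables gives $g(x,r(x))=h\cdot u$ with $u=1-\sum_i v_i\,G_i(x,y(x),r(x))$, hence $J\subseteq I$. At that point the paper simply declares the corresponding bracket $1-v\cdot\partial_y g(x,y(x))+g(x,y(x))\,h(x)$ invertible. You rightly observe that this needs $u(0)\neq 0$, i.e.\ $v(0)\cdot\partial_y g(0,y(0))\neq 1$, and nothing in the hypotheses provides this. You then leave it open, so $I\subseteq J$ and the formula $y=w\,g(x,r)+r$ remain unproved.

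Neither your order comparison nor your Nakayama argument can close this. Take $n=m=1$, $d\geq 1$, $g(x,y)=y+x^{d+1}$ and $y(x)=x^d$:
\begin{itemize}
\item $h=x^d(1+x)$ is $x$-regular of order $d$.
\item Weierstrass division of $x^d$ by $h$ gives $v=(1+x)^{-1}$ and $r=0$, so $g(x,r)=x^{d+1}$.
\item Hence $J=(x^{d+1})\subsetneq (x^d)=I$, and $x^d=w\,x^{d+1}$ has no solution $w\in\mathbb{K}[[x]]$.
\item Here $u=x/(1+x)$ vanishes at $0$. With $g(x,y)=y$ and $y(x)=x^d$ one even gets $g(x,r)=0$.
\end{itemize}
So the case $u(0)=0$ that you wanted to exclude really occurs. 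Your Nakayama route would need $\sum_i v_iG_i\in\mathfrak m$, which is the same condition and fails here.

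What is missing is a hypothesis forcing $u\equiv 1 \bmod \mathfrak m$, for instance $y-r\in\mathfrak m\cdot h$. This holds automatically if you divide by $h^2$ instead of $h$, as the paper itself does for the $y''$-components in the proof of the univariate Artin Approximation Theorem. If $y=v\,h^2+r$, your identity gives $g(x,r)=h\,\bigl(1-h\textstyle\sum_i v_iG_i\bigr)$, and the bracket is a unit since $h(0)=0$. Then $I=J$ and $y=v\,h\,\bigl(1-h\textstyle\sum_i v_iG_i\bigr)^{-1}g(x,r)+r$. With this modification your argument, which is the paper's, goes through verbatim.

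Note also that the paper's own uses of the lemma with division by $M$ alone rely on the uncorrected version. This affects the proof of the Generalized Implicit Function Theorem (e.g.\ $Q=(z^2-x^2)/2$, $\wh z=x$ gives $\wh r=0$ and $M(t,x,\wh r)=0$) and the $y'$-components in the Artin approximation proof. Both would have to be adapted accordingly.
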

\begin{remark}
    Note that given $g(x,y)$ this gives a bijection between the $y(x)$ and the $r(x).$ Given $y(x)$ one gets $r(x)$ as a remainder of dividing $y(x)$ by $g(x,y(x))$, and given $r(x)$ there exists a unique $w(x)$ such that $y(x)=w(x)g(x,r(x))+r(x)$.
\end{remark}
\begin{proof}
Let $v=(v_1,\ldots,v_m)$ and consider the Taylor expansion of $g(x,r(x))$ up to degree two:
\begin{equation}
g(x,r(x))=g(x,y(x))-g(x,y(x))\cdot v(x)\cdot\partial_{\tt{y}}g(x,y(x))+g(x,y(x))^2\cdot h(x),
\end{equation}
where $h(x)\in x\mathbb{K}[[x]]$. Now factor out $g(x,y(x))$ to get 
\begin{equation}    
g(x,r(x))=g(x,y(x))\left[1-v(x)\cdot\partial_{\tt{y}}g(x,y(x))+g(x,y(x))\cdot h(x)\right].
\end{equation}
Then $1+v(x)\cdot\partial_{\tt{y}}g(x,y(x))+g(x,y(x))\cdot h(x)$ is invertible and hence the two ideals $I,J$ are the same. 
\end{proof}
\noindent We will not directly prove Theorem \ref{thm1}, but rather give a proof of the following result. Theorem \ref{thm1} is an easy consequence.
\begin{theorem}[\cite{Denef1980}]\label{easiest pop}
Let $P(t,x,y,z)\in \mathbb{C}\langle t,x,y,z\rangle$ be an
algebraic power series in three single variables $t,x,z$ and some multivariable $y=(y_0,\ldots,y_m)$. Assume given formal power series vectors $\widehat{y}(t),\wh{z}(t,x)$ vanishing at $0$ (i.e., $\wh{y}\in (t)\mathbb{C}[[t]]^{m+1},\wh{z}\in (t,x)\mathbb{C}[[t,x]]$), such that
$$P(t,x,\wh{y}(t),\wh{z}(t,x))= 0.$$
Assume that $$\frac{\partial P}{\partial z}(t,x,\widehat{y}(t),\widehat{z}(t,x))\neq0\bmod (t).$$
Then there exists, for any $c \in \mathbb{N}$, algebraic
power series vectors $y(t)\in \mathbb{C}\langle t\rangle^{m+1},z(t,x)\in\mathbb{C}\langle t,x\rangle,$ with $$P(t,x, y(t)
,z(t,x)) = 0,$$
which coincide with $\wh{y}(t),\wh{z}(t,x)$ up to degree $c$,
$$y(t)\equiv \wh{y}(t) \: \bmod \:(t)^{c+1},$$
$$z(t,x)\equiv \wh{z}(t,x) \: \bmod \:(t,x)^{c+1}.$$
\end{theorem}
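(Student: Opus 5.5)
We follow the strategy outlined in the introduction: we eliminate the variable $x$ via Weierstrass division, reduce to the univariate Artin approximation theorem (Greenberg), and then reconstruct $z$.

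\emph{Step 1: Weierstrass preparation in $x$.} Set $g(t,x,y,z):=\frac{\partial P}{\partial z}(t,x,y,z)$. By hypothesis $\hat g(t,x):=g(t,x,\wh{y}(t),\wh{z}(t,x))$ is nonzero mod $(t)$. Since $t$ is a single variable, $\hat g$ is $x$-regular of some order $d$, with no change of coordinates needed. Apply Lemma \ref{ideale} with the roles $x_n=x$ and $x_1=t$, and with the unknowns $(y,z)$ evaluated at $(\wh y(t),\wh z(t,x))$. Divide $\wh z=\hat v\,\hat g+\hat r$, where $\hat r(t,x)=\sum_{i<d}\hat r_i(t)x^i$. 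The lemma gives $\wh z=\hat w\cdot g(t,x,\wh y,\hat r)+\hat r$, so $g(t,x,\wh y,\hat r)$ generates the same ideal as $\hat g$. In particular, $g(t,x,\wh y,\hat r)$ is still $x$-regular of order $d$, and it depends on $x$ only polynomially through $\hat r$ and algebraically through $P$.

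\emph{Step 2: Taylor expansion and elimination of $x$.} Write $z=w\,G+r$ with $G:=g(t,x,y,r)$ and $r=\sum_{i<d}r_ix^i$. Taylor expansion gives
$$P(t,x,y,wG+r)=P(t,x,y,r)+wG^2+G^2w^2H(t,x,y,r,w),$$
so $P=0$ becomes $P(t,x,y,r)+G^2w(1+wH)=0$. Two conditions follow. First, $G^2$ must divide $P(t,x,y,r)$. Second, once it does, $w$ is determined by the implicit function theorem from the quotient, provided the quotient has the right order; to ensure this, one adds the requirement that $w$ be small.

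To express divisibility in terms of $t$-series, apply algebraic Weierstrass division in $x$ with respect to $G^2$, which is $x$-regular of order $2d$. This is legitimate generically: one introduces the unknown coefficients of the Weierstrass polynomial of $G$, namely $G=\text{unit}\cdot(x^d+\sum a_j(t)x^j)$, as new univariate unknowns $a_j(t)$. Write $P(t,x,y,r)=Q\cdot G^2+\sum_{k<2d}R_k(t,y,r_i,a_j)x^k$, where the $R_k$ are algebraic power series in the finitely many univariate unknowns $u=(r_i,a_j)$; this uses that Weierstrass division of an algebraic series by a Weierstrass polynomial with generic coefficients yields algebraic coefficients. Together with the equations expressing that the $a_j$ are the Weierstrass data of $G$ (again obtained by division, giving algebraic equations in $t,y,u$), we obtain a finite system $F(t,y(t),u(t))=0$ of algebraic equations in $t$ alone. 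The formal data $(\wh y,\hat u)$ solve this system.

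\emph{Step 3: Approximation and reconstruction.} Greenberg's / Artin's univariate approximation theorem gives algebraic $(y(t),u(t))$ solving $F=0$ and agreeing with $(\wh y,\hat u)$ modulo $t^{c'+1}$ for large $c'$. Then $G=g(t,x,y,r)$ is algebraic, agrees with $\hat g$ modulo high powers of $t$, and is thus still $x$-regular of order $d$. Moreover $G^2\mid P(t,x,y,r)$, and the implicit function theorem (algebraic version, appendix) gives an algebraic $w$ close to $\hat w$. Setting $z=wG+r$ yields the solution, and the closeness of $z$ to $\wh z$ mod $(t,x)^{c+1}$ follows from the closeness of all the data.

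\emph{Main obstacle.} The delicate step is making Step 2 honest. One must show that the $x$-division by $G^2$, whose coefficients are themselves unknowns, produces remainder coefficients that are algebraic functions of $(t,y,u)$. One must also check that a solution of the eliminated system, once it is only approximately equal to the formal one, still yields a $G$ of the same $x$-order and a solvable implicit equation for $w$. I would handle this by working with the generic Weierstrass polynomial $x^d+\sum a_jx^j$ as a formal division operator and controlling orders via the choice $c'\gg c+2d$.
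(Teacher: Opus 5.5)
Your route is sound, and it departs from the paper's after the shared first step (dividing $\wh z$ by $\wh g$, Lemma \ref{ideale}, Taylor expansion in $w$).

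\textbf{Comparison with the paper.} The paper keeps $\wh y$ frozen and solves for $\wh w$ by the implicit function theorem inside $\mathbb{K}[[t\vert x\rangle$ (the Generalized Implicit Function Theorem). This yields $\wh z=\sum_i h_i(\wh u(t))x^i$ with $\sum_i h_i(u)x^i$ algebraic. It then expands $P(t,x,y,\sum_i h_i(u)x^i)$ in powers of $x$ and uses noetherianity of $\mathbb{K}\langle t,y,u\rangle$ to cut the infinitely many coefficient equations $H_i$ down to finitely many. Finally it approximates $(\wh y,\wh u)$ jointly and rebuilds $z$ by substitution.

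You reverse the order. You approximate only the finite data $(y,r_0,\dots,r_{d-1})$, subject to the $2d$ explicit equations $R_k=0$ encoding $G^2\mid P(t,x,y,r)$. You then solve for $w$ afterwards over $\mathbb{K}\langle t,x\rangle$, which is essentially Tougeron's implicit function theorem in the $x$-direction. This buys you three things. You need neither the auxiliary ring nor the noetherian reduction. You get an explicit finite system. And because you divide by $G^2$ rather than by $x^{2d}$, you also cover the case where $\wh g$ is $x$-regular without being $x^d$ times a unit (e.g.\ $\wh g=x-t$), which the paper's proof of the Generalized Implicit Function Theorem glosses over. The paper's route buys instead the structural statement $\wh z\in\mathbb{K}[[t\vert x\rangle$ and the description of the solution space in its closing remark.

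\textbf{Points to tighten.}

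(i) Your ``main obstacle'' dissolves. After recentering $r_i$ at $\wh r_i(0)$ (which need not vanish), $G(t,x,y,r)$ is an algebraic series whose restriction to $t=0$, $y=0$, $r=\wh r(0)$ is $g(0,x,0,\wh r(0,x))$, a unit times $x^d$. So Weierstrass division by $G^2$ with parameters in $\mathbb{K}\langle t,x,y,r\rangle$ directly yields algebraic $R_k(t,y,r)$, and the unknowns $a_j$ are superfluous. Uniqueness of formal division then gives $R_k(t,\wh y,\wh r)=0$.

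(ii) You cannot ``add the requirement that $w$ be small'', since $\wh w(0,0)$ is generally nonzero. What makes the last step work is the identity $G^2E(w)=P(t,x,y,r+wG)$ for $E(w)=Q+w+w^2H$, with $Q$ your quotient. It gives $\partial_wE=\partial_zP(t,x,y,r+wG)/G$, which at $\wh w$ equals $\wh g/\wh G$, a unit. So apply the implicit function theorem at $\wh w(0,0)$. Uniqueness of the lift modulo $t^{c'+1}$ then gives $w\equiv\wh w \bmod t^{c'+1}$, and $c'\ge c$ suffices.

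(iii) That $\wh g/\wh G$ is a unit is exactly the content of Lemma \ref{ideale}. Its proof, and hence both your Step 1 and the paper's argument, tacitly assumes that the bracket $1-v\cdot\partial_{y}g+g\,h$ is invertible, which can fail. For example, $P=z^2-x^2$ with $\wh z=x$ gives $\wh g=2x$, $\wh r=0$ and $g(t,x,\wh r)=0$. Dividing $\wh z$ by $x\wh g$ instead of $\wh g$ repairs this, and then the $w$-equation has derivative $1$ at the origin.
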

\noindent As mentioned above this is a special case of Popescu's nested approximation theorem but allows for a simpler proof. The theorem can easily be generalized to multiple $z$ variables, which is done in section \ref{sec gen}. 
\\ \textit{Outline of the proof of Theorem \ref{easiest pop}:} The proof of Theorem \ref{easiest pop} has two steps. 
\\For the first step, one introduces a ring that emphasizes the $x$-variable. This ring is used to express the series $\wh{z}(t,x)$ as a power series that is algebraic in $x$ and in certain formal power series $\wh{u}(t)=(\wh{u}_1(t),\ldots,\wh{u}_s(t))$ in $t$. The idea is to then approximate both $\wh{y}(t)$ and $\wh{u}(t)$ by algebraic power series using the Artin Approximation Theorem in the second step. To do so, we consider the following ring introduced by \cite{Denef1980}:
$$\mathbb{K}[[t\vert x\rangle\coloneqq\{q(\wh{u}_1(t),\ldots,\wh{u}_s(t),x)\in\mathbb{K}[[t,x]],\;q(u_1,\ldots,u_s,x)\in \mathbb{K}\langle u_1,\ldots, u_s,x \rangle,$$$$  \wh{u}_1(t),\ldots, \wh{u}_s(t)\in (t)\mathbb{K}[[t]]\quad \mathrm{for}\;\mathrm{some}\;s\in \mathbb{N}\}.$$
Its elements are obtained from algebraic power series in which the variables are replaced by formal power series in $t$. Note that $s$ is not fixed. We note that we will also use the same notation for $x$ being a multivariable later on. One easily verifies that this is a ring and inherits the following properties of the rings $\mathbb{K}\langle u_1,\ldots,u_s,x\rangle$:
\begin{lemma}
Let $R=\mathbb{K}[[t\vert x\rangle$. Then
    \begin{itemize}
        \item $R$ is closed under partial differentiation with respect to the $x_i$,
        \item the Weierstrass Division theorem holds in $R$,
        \item the Inverse and Implicit Function Theorem hold for functions in $R$.
    \end{itemize}
\end{lemma}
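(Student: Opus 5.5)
The idea is to reduce every claim about $R=\mathbb{K}[[t\vert x\rangle$ to the corresponding classical statement in some ring $\mathbb{K}\langle u_1,\ldots,u_s,x\rangle$, and then substitute $u=\wh{u}(t)$. First I check that $R$ is a ring. Given $f=q(\wh{u}(t),x)$ and $g=p(\wh{v}(t),x)$, I concatenate the auxiliary series into $(\wh{u},\wh{v})$. Then $f+g$ and $fg$ are obtained by substituting into $q(u,x)+p(v,x)$ and $q(u,x)p(v,x)$, which lie in $\mathbb{K}\langle u,v,x\rangle$. Substitution of series without constant term is well defined, because all $\wh{u}_i\in(t)$.

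For differentiation, note that $\partial_{x_i}$ commutes with the substitution $u\mapsto\wh{u}(t)$, since the $\wh{u}$ do not depend on $x$. Hence $\partial_{x_i}f=(\partial_{x_i}q)(\wh{u}(t),x)$. It remains to recall that $\mathbb{K}\langle u,x\rangle$ is closed under derivation. This is standard: differentiating $Q(u,x,q)=0$ with $Q$ minimal, $\partial_q Q(u,x,q)\neq0$, gives $\partial_{x_i}q\in\mathbb{K}(u,x,q)$, which is algebraic.

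Weierstrass division comes next. Let $f,g\in R$ with $g$ $x_n$-regular of order $d$; here I will take $x_n$-regularity in $R$ to mean that $g(0,x)$ has order $d$ in $x_n$ (for single $x$: $g(0,x)=x^d\cdot$unit). Write $f=q(\wh{u},x)$ and $g=p(\wh{u},x)$ over common auxiliary series. Since $\wh{u}(0)=0$, we have $p(0,x)=g(0,x)$, so $p$ is $x_n$-regular of order $d$ as a series in $(u,x)$. I then apply algebraic Weierstrass division in $\mathbb{K}\langle u,x\rangle$, as recalled in the appendix, to get $q=a\,p+\sum_{j<d}b_j(u,x')x_n^j$ with algebraic $a,b_j$. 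Substituting $u=\wh{u}(t)$ yields a division in $R$ with remainder of the required shape. Uniqueness follows from uniqueness of formal Weierstrass division in $\mathbb{K}[[t,x]]$.

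For the implicit function theorem, suppose $F(x,w)$ has components in $R$, with $F(0,0)=0$ and $\partial_wF(0,0)$ invertible. As before, write $F=G(\wh{u}(t),x,w)$ with $G$ algebraic in $(u,x,w)$. Then $\partial_wG(0,0,0)=\partial_wF(0,0)$ is invertible, so the algebraic implicit function theorem gives an algebraic $w=\phi(u,x)$ with $G(u,x,\phi)=0$. Then $\phi(\wh{u}(t),x)\in R$ solves $F=0$, and it is the unique formal solution. The inverse function theorem is the special case $F=h(w)-x$.

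The main obstacle is regularity. If $x_n$-regularity is instead meant relative to all of $(t,x)$, i.e.\ $g(0,\ldots,0,x_n)$ has order $d$, then $p(0,0,x_n)=g(0,0,x_n)$ still holds, so the argument above goes through unchanged. The step that genuinely needs care is the bookkeeping: in each property one must merge the finitely many auxiliary series into a single tuple $\wh{u}$ before invoking the algebraic theorem.
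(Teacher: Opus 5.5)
Your proposal is correct and follows the same route as the paper, which only remarks that $R$ ``inherits'' these properties from the rings $\mathbb{K}\langle u_1,\ldots,u_s,x\rangle$. Your argument is precisely the verification the paper leaves to the reader: merge the auxiliary series into one tuple $\wh{u}$, use $\wh{u}(0)=0$ to transfer regularity and invertibility at the origin, apply the algebraic theorem, substitute back, and get uniqueness from the formal theorems in $\mathbb{K}[[t,x]]$.
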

 \noindent The next step is to prove the following generalization of the Implicit Function Theorem:
\begin{theorem}[Generalized Implicit Function Theorem]\label{easier ''}
    Let $x$ be a single variable and $Q(t,x,z)\in\mathbb{K}[[t\vert x,z\rangle$ with a formal power series solution $\wh{z}(t,x)\in (t,x)\mathbb{K}[[t,x]]$ of $Q=0$ such that $$\frac{\partial Q}{\partial z}(t,x,\wh{z}(t,x))\neq0\bmod (t).$$ Then $\wh{z}(t,x)\in \mathbb{K}[[t\vert x\rangle$.   
\end{theorem}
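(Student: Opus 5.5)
The plan is to reduce the statement, via Lemma \ref{ideale}, to the ordinary Implicit Function Theorem in $R=\mathbb{K}[[t\vert x\rangle$. The reduction uses Weierstrass division in $R$ and a factorization that turns the derivative condition into a unit condition.

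\emph{Step 1: the case $d=0$ and the regularity of $g$.} Put $g(t,x,z):=\frac{\partial Q}{\partial z}(t,x,z)$. This again lies in $\mathbb{K}[[t\vert x,z\rangle$, since $R$ is closed under partial differentiation. The hypothesis $g(t,x,\wh{z})\not\equiv 0\bmod (t)$ means that $g(0,x,\wh{z}(0,x))$ is a nonzero series in $x$, say of order $d$. Hence $g(t,x,\wh{z}(t,x))$ is $x$-regular of order $d$. If $d=0$, this derivative is a unit. Then the Implicit Function Theorem in $R$ applied to $Q=0$ at the origin directly gives a solution in $R$, and by formal uniqueness this solution equals $\wh{z}$. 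So assume $d\ge 1$.

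\emph{Step 2: division and the new unknown $\wh{w}$.} Divide formally $\wh{z}=v\cdot g(t,x,\wh{z})+r$, with remainder $r=\sum_{i<d} r_i(t)x^i$ and $r_i\in\mathbb{K}[[t]]$. Splitting off constants, $r_i(t)=r_i(0)+(r_i(t)-r_i(0))$ with $r_i-r_i(0)\in(t)$. Adjoining these finitely many series as new $\wh{u}$'s shows $r\in R$. Consequently $h:=g(t,x,r)\in R$. By Lemma \ref{ideale}, $h$ generates the same ideal as $g(t,x,\wh{z})$, i.e.\ $g(t,x,\wh{z})=h\cdot e$ with $e$ a unit. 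The lemma also gives $\wh{z}=r+h\,\wh{w}$ for some formal $\wh{w}$. It therefore suffices to show $\wh{w}\in R$.

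\emph{Step 3: the equation for $\wh{w}$.} Taylor expand in the $z$-slot:
$$Q(t,x,r+hw)=Q(t,x,r)+h\,w\,g(t,x,r)+h^2w^2S(t,x,w)=Q(t,x,r)+h^2\bigl(w+w^2S(t,x,w)\bigr),$$
with $S\in\mathbb{K}[[t\vert x,w\rangle$. Substituting $\wh{w}$ shows that $Q(t,x,r)$ is formally divisible by $h^2$.

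Now $h^2\in R$ is $x$-regular, so Weierstrass division in $R$ gives $Q(t,x,r)=h^2q+\rho$ with $q,\rho\in R$. By uniqueness of the formal Weierstrass division, $\rho=0$. Hence
$$G(t,x,w):=q+w+w^2S(t,x,w)\in\mathbb{K}[[t\vert x,w\rangle$$
satisfies $G(t,x,\wh{w})=0$ and $h^2G=Q(t,x,r+hw)$. Differentiating the latter in $w$ gives
$$h^2\,\partial_wG=h\,\partial_zQ(t,x,r+hw).$$
At $w=\wh{w}$ the right-hand side equals $h\cdot h e$, so $\partial_wG(t,x,\wh{w})=e$ is a unit.

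\emph{Step 4: conclusion.} Apply the Implicit Function Theorem in $R$ to $G(t,x,w)=0$, centered at $w_0=\wh{w}(0,0)\in\mathbb{K}$, i.e.\ in the variable $w-w_0$. This yields a solution in $R$, which coincides with $\wh{w}$ by formal uniqueness. Thus $\wh{w}\in R$, and hence $\wh{z}=r+h\wh{w}\in R$.

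The main obstacle I expect is Step 3: making sure that every object ($r$, $h$, $q$, $S$) really lies in $R$ or in $\mathbb{K}[[t\vert x,w\rangle$, and not merely in the formal ring. This relies on the compatibility of the formal and the $R$-Weierstrass divisions. The second delicate point is ensuring that the derivative of the reduced equation is a unit, which is exactly what Lemma \ref{ideale} provides.
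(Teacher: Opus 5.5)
Your route is the paper's own. You divide $\wh z$ by $\delta:=\partial_zQ(t,x,\wh z)$, invoke Lemma \ref{ideale} to get $\wh z=r+h\wh w$ with $h=\partial_zQ(t,x,r)\in R$ and $\delta=he$ for a unit $e$, Taylor-expand, and hand $\wh w$ to the classical Implicit Function Theorem. The gap is precisely the unit $e$: Lemma \ref{ideale} is false as stated. Expanding gives $h=\partial_zQ(t,x,\wh z-v\delta)=\delta\,\bigl(1-v\,\partial_z^2Q(t,x,\wh z)+\delta\cdot(\dots)\bigr)$. The bracket is a unit only when $v(0,0)\,\partial_z^2Q(0,0,0)\neq1$, and the lemma's proof simply asserts invertibility. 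Concretely, $Q=\tfrac12(z^2-x^2)$ with $\wh z=x$ satisfies all hypotheses ($\delta=x$, $d=1$). The division gives $v=1$, $r=0$, hence $h=\partial_zQ(t,x,0)=0$, so neither $\delta=he$ nor $\wh z=r+h\wh w$ can hold. The paper's proof uses the lemma at the same point, so it does not rescue your Step 2. It also does not rescue your computation $\partial_wG(t,x,\wh w)=e$ in Step 3, which depends on that unit.

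The standard repair (as in Tougeron's lemma) is to divide by $\delta^2$ instead: $\wh z=v\delta^2+r$ with $r\in\mathbb{K}[[t]][x]_{<2d}$, so $r\in R$ as in your Step 2. Now $h:=\partial_zQ(t,x,r)=\delta\bigl(1-v\delta\,\partial_z^2Q(t,x,\wh z)+\dots\bigr)$. The bracket equals $1$ at the origin because $\delta(0,0)=0$ when $d\geq1$. Hence $h=\delta\epsilon$ with $\epsilon$ a unit, $h$ is $x$-regular of order $d$, and $\wh z=r+h^2\wh w$ with $\wh w=v\epsilon^{-2}$. Taylor expansion gives $Q(t,x,r+h^2w)=Q(t,x,r)+h^3\bigl(w+h\,w^2S(t,x,w)\bigr)$. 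Your Step 3 then works verbatim with $h^3$ in place of $h^2$. Weierstrass division in $R$ and uniqueness of formal division give $Q(t,x,r)=h^3q$ with $q\in R$. Then $G=q+w+h\,w^2S$ satisfies $G(t,x,\wh w)=0$ and $\partial_wG(0,0,w)=1$, so the Implicit Function Theorem recentred at $\wh w(0,0)$ gives $\wh w\in R$.

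With this change your argument is tighter than the paper's. The paper writes $\partial_zQ(t,x,\wh r)=x^d\cdot(\text{unit})$ and divides by $x^{2d}$, a factorization that fails in general: for $Q=z^2+(x-t)z$ and $\wh z=0$ one gets $x-t$. The paper also ignores the recentring needed when $\wh w(0,0)\neq0$. Dividing by a power of $h$ inside $R$, as you do, avoids both problems.
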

 \noindent Note that this is indeed a generalization of the classical Implicit Function Theorem, as it does not consider the derivative at zero but at the formal solution $\wh{z}(t,x)$. To illustrate this generalization consider once again the equation for generalized Dyck paths: $$P(t,x,y,z)=x+tx^2z+t(z-y)-xz$$ and its derivative with respect to $z:$
$$\frac{\partial P}{\partial z}=tx^2+tz-x.$$
Here the conventional Implicit Function Theorem does not apply as $\frac{\partial P}{\partial z}(0)=0 $. However the generalized version holds as $tx^2+tz(t)-x\neq0\bmod (t)$ for all formal power series $z(t)$.
\\The statement only holds for $z$ a single variable, for generalizations see section \ref{sec gen}. The following proof was suggested to us by Herwig Hauser.
\begin{proof}[Proof of Theorem \ref{easier ''}]
 The basic idea of the proof is to reduce to the case where the usual Implicit Function Theorem applies. 
 \\Set $M(t,x,\wh{z}(t,x))\coloneqq\frac{\partial Q}{\partial z}(t,x,\wh{z}(t,x))\neq0\bmod (t)$ by assumption. The power series $M(t,x,\wh{z}(t,x))$ is therefore $x$-regular of order $d$ for some $d\in\mathbb{N}$. We now use formal Weierstrass Division to get $$\wh{z}(t,x)=\wh{v}(t,x)M(t,x,\wh{z}(t,x))+\widehat{r}(t,x),$$
 where $\wh{v}(t,x)\in\mathbb{K}[[t,x]]$ and $\widehat{r}(t,x)=\sum_{i=0}^{d-1}\widehat{r}_i(t)x^i\in\mathbb{K}[[t]][x]_{<d}$. From now on we will often leave out the $x$ and $t$ in $\widehat{z}(t,x),\wh{v}(t,x)$ and $\widehat{r}(t,x)$ for readability reasons.
 \\The remainder $\wh{r}$ is polynomial in $x$ and hence has finitely many formal power series in $t$ as coefficients.  Hence, it already is an element of $\mathbb{K}[[t\vert x\rangle$:
 Rewrite, using Lemma \ref{ideale},   $$\wh{z}(t,x)=\wh{w}(t,x)M(t,x,\wh{r}(t,x))+\widehat{r}(t,x),$$
for some $\wh{w}(t,x)\in\mathbb{K}[[t,x]]$.
Write $M(t,x,\wh{r})=x^{d}\wt{M}(t,x,\wh{r})$, with $\wt{M}(t,x,\wh{r})=\lambda+h(t,x,\wh{r})$ for some $\lambda\in\mathbb{K}^*$ and some $h(t,x,r)\in(t,x,\kappa)\mathbb{K}[[t\vert x,\kappa\rangle$. Taylor expansion yields
\begin{align}\label{simple root 1}
\begin{split}
    0=Q(t,x,\wh{z})=Q(t,x,\wh{r}+x^d\cdot \wh{w}\cdot \widetilde{M}(t,x,\wh{r}))\\
    =Q(t,x,\wh{r})+x^d\cdot \wh{w}\cdot \widetilde{M}(t,x,\wh{r})\cdot\partial_zQ(t,x,\wh{r})+S(t,x,x^d\cdot \wh{w}),
\end{split}
\end{align}
where $S(t,x,w)\in\mathbb{K}[[t\vert x,w\rangle$ with $\mathrm{ord}_w S\geq2$. From \begin{align*}
    \mathrm{ord}_x(x^d\cdot\wh{w}\cdot \widetilde{M}(t,x,\wh{r})\cdot\partial_zQ(t,x,\wh{r}))&=\mathrm{ord}_x(x^{2d}\cdot \wh{w}\cdot \widetilde{M}(t,x,\wh{r})^2)=2d+\mathrm{ord}_x\wh{w},\\
    \mathrm{ord}_x(S(t,x,x^d\cdot \wh{w}))&\geq2d,
\end{align*}
it follows that
$$\mathrm{ord}_xQ(t,x,\wh{r})\geq 2d.$$
Now divide equation (\ref{simple root 1}) by $x^{2d}$ to get
$$0=x^{-2d}Q(t,x,\wh{r})+x^{-2d}S(t,x,x^d\cdot \wh{w})+\wh{w}\cdot(\lambda+h(t,x,\wh{r}))^2.$$
This is an implicit equation for $w$ with the assumption of the classical Implicit Function Theorem. Therefore, this equation has a unique solution in $\mathbb{K}[[t\vert x\rangle$. Then $\wh{w}$ is this unique solution and belongs to $\mathbb{K}[[t\vert x\rangle$. From this it follows that $\wh{z}\in\mathbb{K}[[t\vert x\rangle$ as $\mathbb{K}[[t\vert x\rangle$ is a ring.
\end{proof}
\noindent This Generalized Implicit Function Theorem will be used in combination with:
\begin{theorem}[Univariate Artin Approximation \cite{Greenberg1966},\cite{ARTIN1968}]\label{ArtinAprox}
Let $t$ denote a single variable and $y=(y_1,\ldots,y_m)$ a multivariable. Further, let $f(t, y)\in \mathbb{K}\langle t,y\rangle^r$ be a vector of algebraic power series in $t$ and $y$. Assume given a formal power series solution $\widehat{y}(t)\in\mathbb{K}[[t]]^m$ of $f(t,y)=0$ vanishing at $0$,$$f(t,\widehat{y}(t))=0.$$ Then there exists, for any $c \in \mathbb{N}$, an algebraic power series solution $y(t)\in \mathbb{K}\langle t\rangle^m$,$$f(t, y(t)) = 0,$$ which coincides with $\widehat{y}(t)$ up to degree $c$, $$y(t)\equiv \widehat{y}(t) \: \bmod \:(t)^{c+1}.$$
\end{theorem}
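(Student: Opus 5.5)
The plan is to follow the classical route of Greenberg and Artin: first reduce to a system whose Jacobian has maximal rank at $\wh{y}(t)$, then use a Tougeron-type strengthening of the Implicit Function Theorem in $\mathbb{K}\langle t\rangle$ to lift a polynomial truncation of $\wh{y}(t)$ to an algebraic solution.

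\textbf{Step 1 (reduction to a prime ideal).} Let $\varphi:\mathbb{K}\langle t,y\rangle\to\mathbb{K}[[t]]$ be the substitution $y\mapsto\wh{y}(t)$, and let $\mathfrak{p}=\ker\varphi$. This is a prime ideal containing $f_1,\ldots,f_r$. It suffices to find an algebraic $y(t)$ with $g(t,y(t))=0$ for all $g\in\mathfrak{p}$, with prescribed agreement with $\wh{y}$.

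\textbf{Step 2 (Jacobian criterion).} Let $h$ be the height of $\mathfrak{p}$. Since $\mathbb{K}\langle t,y\rangle$ is an excellent regular local ring and $\mathrm{char}\,\mathbb{K}=0$, the domain $\mathbb{K}\langle t,y\rangle/\mathfrak{p}$ is generically smooth. So there are $g_1,\ldots,g_h\in\mathfrak{p}$, an $h\times h$ minor $\delta$ of $\partial(g_1,\ldots,g_h)/\partial y$ with $\delta\notin\mathfrak{p}$, and an element $e\notin\mathfrak{p}$ with $e\cdot\mathfrak{p}\subseteq(g_1,\ldots,g_h)$. In particular $e\cdot f_j=\sum_i a_{ji}g_i$. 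Because $\delta,e\notin\mathfrak{p}$, the series $\delta(t,\wh{y})$ and $e(t,\wh{y})$ are nonzero, with finite $t$-orders $e_0$ and $e_1$. I expect this step to be the main obstacle: one has to verify that the Jacobian criterion holds in the algebraic (Henselian) setting. I would deduce it from the polynomial case, using that $\mathbb{K}\langle t,y\rangle$ is a localization of a finitely generated $\mathbb{K}$-algebra at a maximal ideal followed by an étale extension.

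\textbf{Step 3 (Tougeron lifting).} Let $N\ge\max(c,e_1)+2e_0+1$ and let $\wt{y}(t)$ be the truncation of $\wh{y}(t)$ at degree $N$. Then $g(t,\wt{y})\equiv0\bmod t^{N+1}$ and $\mathrm{ord}_t\,\delta(t,\wt{y})=e_0$. After renumbering the $y$-variables, make the ansatz $y=\wt{y}+\delta(t,\wt{y})\,u$, where $u$ involves only the $h$ variables belonging to $\delta$. Taylor-expand $g(t,\wt{y}+\delta u)$ and multiply by the adjugate of the Jacobian submatrix. The resulting equation has the form
$$u+\delta^{-2}\,\mathrm{adj}\cdot g(t,\wt{y})+(\text{terms of order}\ \ge2\ \text{in}\ u)=0.$$
By the choice of $N$, the constant term lies in $t^{N+1-2e_0}\mathbb{K}\langle t\rangle$, and the equation satisfies the hypotheses of the classical Implicit Function Theorem from the Appendix. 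This yields an algebraic $u(t)\in t^{N+1-2e_0}\mathbb{K}\langle t\rangle^h$. Setting $y(t)=\wt{y}+\delta(\wt{y})u$ gives $g_i(t,y(t))=0$ for all $i$ and $y\equiv\wh{y}\bmod t^{N+1-e_0}$, which is at least $c+1$.

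\textbf{Step 4 (conclusion).} Since $y\equiv\wh{y}$ modulo a power of $t$ larger than $e_1$, we get $e(t,y(t))\ne0$. From $e\,f_j=\sum_i a_{ji}g_i$ we obtain $e(t,y)f_j(t,y)=0$ in the domain $\mathbb{K}\langle t\rangle$, hence $f(t,y(t))=0$. Finally, $y(t)$ vanishes at $0$ because it agrees with $\wh{y}$ modulo $t$.
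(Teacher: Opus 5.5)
Your argument is correct. It follows the same overall plan as the paper: pass to the prime kernel $\mathfrak p$ of $y\mapsto\wh y(t)$, find $h$ equations in $\mathfrak p$ with an $h\times h$ minor of $\partial_y$ not vanishing at $\wh y$, and use the adjugate to reduce to the classical Implicit Function Theorem. Both halves, however, are carried out differently.

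\textbf{The reduction.} The paper (Lemma \ref{Reduction}) takes a complete intersection $V(F_1,\ldots,F_s)$ having $V(I)$ as a component. It then disposes of the other components using an element that vanishes on them but not at $\wh y$, together with the primality of the kernel of $y\mapsto\wt y(t)$. Your statement $e\,\mathfrak p\subseteq(g_1,\ldots,g_h)$ with $e\notin\mathfrak p$ is more economical and turns Step 4 into one line. It also guarantees that the $g_i$ generate $\mathfrak p$ generically. The paper's route needs this too, because a Jacobian minor of $F$ can be nonzero at $\wh y$ only if $(F)$ agrees with $I$ at the generic point of $X$, but the paper does not state it. Your worry about the Jacobian criterion in $\mathbb{K}\langle t,y\rangle$ is legitimate, though the paper simply cites the variety version, so you assume no more than it does.

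\textbf{One line to add.} The criterion gives a minor of the full matrix $\partial g/\partial(t,y)$. To place $\delta$ among the $y$-columns, differentiate $g(t,\wh y(t))=0$ in $t$, as the paper does. After substituting $\wh y$, the $t$-column becomes a $\mathbb{K}[[t]]$-combination of the $y$-columns, so $\partial_y g(t,\wh y)$ still has rank $h$.

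\textbf{The lifting.} The paper never truncates $\wh y$. It Weierstrass-divides $\wh y'$ by $M(t,\wh y)$ and $\wh y''$ by $M(t,\wh y)^2$, then uses Lemma \ref{ideale} to rewrite this as $\wh y=\wh a\,M(t,r)+r$ with polynomial $r$. It truncates only $\wh a''$ and solves for $a'$. You instead run a Tougeron-type lift from the jet $\wt y$ with $N\ge\max(c,e_1)+2e_0+1$. You correct only the $y'$-coordinates by $\delta(t,\wt y)\,u$ and read off $\mathrm{ord}_t u\ge N+1-2e_0$ from the fixed-point equation.

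Your version needs only Taylor expansion, the adjugate and the Implicit Function Theorem, at the cost of explicit order bookkeeping. The paper's exact decomposition needs no choice of $N$ and mirrors its proof of the Generalized Implicit Function Theorem. It also shows that the solutions sharing the remainder $r$ are parametrized by a free choice of $a''$, which is the Hauser--Woblistin picture the paper wants to emphasize.
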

\noindent The proof of this theorem is not very involved and only uses some commutative algebra, Taylor expansion and the Weierstrass Division theorem. It will be given in section \ref{artin section}. We note that in our setting one actually has to consider the case $r\geq 1$ as we will want to approximate both $\wh{y}(t)$ and $\wh{u}(t)$ at the same time.
\\With it we finally have all the ingredients to  prove Theorem \ref{easiest pop}:
\begin{proof}[Proof of Theorem \ref{easiest pop}]
Let $P(t,x,y,z)$ be an algebraic power series with a nested formal solution $\wh{y}(t)\in(t)\mathbb{K}[[t]]^{m+1},\;\wh{z}(t,x)\in(t,x)\mathbb{K}[[t,x]]$ of $P=0$. Consider $Q(t,x,z)\coloneqq P(t,x,\wh{y}(t),z)$. This series is an element of $\mathbb{K}[[t\vert x,z\rangle$. Furthermore, $Q=0$ is an equation with formal solution $\wh{z}(t,x)$ satisfying $\frac{\partial Q}{\partial z}(t,x,\wh{z}(t,x))\neq0\bmod (t).$ Hence, $\widehat{z}(t,x)\in\mathbb{K}[[t\vert x\rangle$ by the generalized Implicit Function Theorem.
\\By construction $\wh{z}(t,x)$ can be written as $\sum_{i\geq 0}h_{i}(\wh{u}(t))x^i,$ where $\sum_{i\geq 0}h_{i}(u)x^i \in\mathbb{K}\langle u,x\rangle$ and $\wh{u}(t)=(\wh{u}_1(t),\ldots,\wh{u}_s(t))\in (t)\mathbb{K}[[t]]$. Substituting this $\wh{z}(t,x)$ into $P(t,x,y,z)$ yields $$0=P(t,x,\wh{y}(t),\sum_{i\geq 0}h_i(\wh{u}(t))x^i).$$ Expansion as a power series in $x$ gives $$0=\sum_{i\geq0}H_i(t,\wh{y}(t),\wh{u}(t))x^i,$$ where the $H_i$ are elements of $\mathbb{K}\langle t,y,u\rangle$ since $P$ is algebraic. 
 As the ring $\mathbb{K}\langle t,y,u\rangle$ is noetherian, the infinite ideal $( H_i \mid i\in\mathbb{N})$ can be reduced to an equivalent finite system $\{H_j\mid j\in J\}$ for some finite set $J$ such that all $H_j$ vanish at the formal solution $\wh{y}(t),\wh{u}(t)$. Now apply the classical Artin Approximation Theorem for some $c\in\mathbb{N}$ to get algebraic $(y(t),u(t))\in\mathbb{K}\langle t\rangle^{m+s}$ such that $H_j(t,y(t),u(t))=0$ for all $j\in J$. Therefore, $H_i(t,y(t),u(t))=0$ for all $i\in\mathbb{N}$. 
 Furthermore, 
 $$y_i(t)\equiv\wh{y}_i(t) \: \bmod \:(t)^{c+1}\: \mathrm{for} \: i=1,\ldots,m,$$
     $$u_j(t)\equiv\wh{u}_j(t) \: \bmod \:(t)^{c+1}\: \mathrm{for} \: j=1,\ldots,s.$$
     Set $z_i(t,x)=\sum_{i\geq 0}h_{i}(u(t))x^i$. Then, by construction, $z(t,x)$ is an algebraic power series with $P(t,x,y(t),z(t,x))=0$ and $$z_i(t,x)\equiv \wh{z}_i(t,x) \: \bmod \:(t,x)^{c+1}\: \mathrm{for} \: i=1,\ldots,k.$$
     \\Hence, $z(t,x)$ and $y(t)$ are as required.
\end{proof}
\begin{remark}
    Hauser and Woblistin \cite{Hauser_Woblistin_2021} prove that the univariate solution space of solutions $y(t),u(t)$ of the system of equations $H_j(t,y,u)$ for $j\in J$ admits a natural stratification such that every stratum decomposes into a cartesian product of a finite dimensional variety (that may be non-smooth) and some infinite dimensional affine space. Therefore, after substitution into $\sum_{i\geq 0}h_{i}(u)x^i$, the space of nested solutions obtained by this proof technique is the image of a Cartesian product. More specifically, it is the image of a Cartesian product where one factor is finite dimensional and may be singular, whereas the second factor is an infinite dimensional affine space.
\end{remark}
\section{The proof of the univariate Artin Approximation Theorem}\label{artin section}
For the convenience of the reader this section offers a proof of the  univariate Artin Approximation Theorem. Just like in the proof of Theorem \ref{easier ''} above, we try to reduce to the case where the Implicit Function Theorem applies. So we start with an algebraic power series vector $f(t,y)\in\mathbb{K}\langle t,y\rangle^r$ and a formal solution vector $\wh{y}(t)\in(t)\mathbb{K}[[t]]^m$ of $f(t,y(t))=0$. We want to divide by a $t$- regular minor of $\left(\frac{\partial f_i}{\partial y_j}\right)_{i,j}$ evaluated in $\wh{y}(t)$. So we will first have to show that such a minor always exists: 
\begin{lemma}\label{Reduction}
    Let $f(t, y)\in \mathbb{K}\langle t,y\rangle ^r$ be a vector of algebraic power series in $t$ and $y=(y_0,\ldots,y_m)$. Assume given a formal power series solution $\widehat{y}(t)$ vanishing at $0$,$$f(t,\widehat{y}(t))=0.$$
    Then there exists a system of algebraic equations $F=(F_1(t,y),\ldots,F_s(t,y))\in\mathbb{K}\langle t,y\rangle^s$ with an $s\times s$-minor $M$ of $\partial_{\tt{y}} F$ that does not vanish on $\wh{y}(t)$ (i.e., $M(t,\wh{y}(t))\neq 0)$ and such that if $y(t)$ is an algebraic solution of $F=0$ approximating $\wh{y}(t)$ up to an arbitrarily high degree $c$, then $y(t)$ is already a solution of $f=0$.
\end{lemma}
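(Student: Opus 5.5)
We would follow the standard reduction used in Greenberg's and Artin's proofs. The main device is to replace the ideal generated by $f$ by a prime ideal and to pass to a regular point of the associated variety.

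Let $\mathfrak{p}\subset\mathbb{K}\langle t,y\rangle$ be the kernel of the evaluation homomorphism
$$\mathbb{K}\langle t,y\rangle\to\mathbb{K}[[t]],\qquad g(t,y)\mapsto g(t,\wh{y}(t)).$$
This is a prime ideal, since $\mathbb{K}[[t]]$ is a domain, and it contains $f_1,\ldots,f_r$. The ring $\mathbb{K}\langle t,y\rangle$ is noetherian, so $\mathfrak{p}=(g_1,\ldots,g_k)$ for finitely many $g_i$. We also have $t\notin\mathfrak{p}$, because $t$ is not mapped to $0$. Let $h$ be the height of $\mathfrak{p}$.

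The quotient $A=\mathbb{K}\langle t,y\rangle/\mathfrak{p}$ is an excellent local domain of characteristic zero. Its fraction field is a separable extension of $\mathbb{K}(t,y)$, and this gives the Jacobian criterion: the rank of $\left(\partial g_i/\partial y_j\right)$, computed over the fraction field of $A$, equals $h$. Taking derivatives only in $y$ (and not in $t$) still gives rank $h$. The reason is that $A$ injects into $\mathbb{K}[[t]]$ through $\wh{y}$, so the transcendence degree of $\mathrm{Frac}(A)$ over $\mathbb{K}(t)$ is $m+1-h$, and separability over $\mathbb{K}(t)$ then gives the rank statement.

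So we may pick $h$ generators, say $F=(g_1,\ldots,g_h)$ after renumbering, together with an $h\times h$-minor $M$ of $\partial_{\tt{y}}F$ such that $M\notin\mathfrak{p}$. By the definition of $\mathfrak{p}$ this means exactly $M(t,\wh{y}(t))\neq0$. Here $s=h$.

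It remains to show that every approximate solution of $F=0$ solves $f=0$; this is the main obstacle. The standard fact is a localization statement: $\mathfrak{p}$ coincides with $(F)$ after inverting $M$. More precisely, there is an integer $e$ with
$$M^{e}\,\mathfrak{p}\subseteq (F_1,\ldots,F_h).$$
To prove this, note that $(F)_M$ is the ideal of a complete intersection which is smooth where $M\neq0$. The unique minimal prime of $(F)_M$ passing through the generic point of $\mathfrak{p}$ is $\mathfrak{p}_M$, and $(F)$ and $\mathfrak{p}$ agree after localizing at $\mathfrak{p}$ by the Jacobian criterion. Other components of $V(F)$ are removed by a further element $N\notin\mathfrak{p}$. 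Replacing $M$ by $MN$ in the statement below, we obtain $N M^{e}f_i\in(F)$ for all $i$.

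Now take an algebraic solution $y(t)$ of $F=0$ with $y(t)\equiv\wh{y}(t)\bmod (t)^{c+1}$. Let $c$ exceed $\mathrm{ord}_t$ of $(NM^{e})(t,\wh{y}(t))$; this is possible because that series is nonzero. Then $(NM^e)(t,y(t))$ has the same nonzero order, so it does not vanish. From $NM^e f_i\in(F)$ and $F(t,y(t))=0$ we get $(NM^e)(t,y(t))\,f_i(t,y(t))=0$ in the domain $\mathbb{K}[[t]]$. Hence $f_i(t,y(t))=0$ for all $i$.

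Since we need $M(t,\wh{y}(t))\neq0$ for the eventual Weierstrass step, the remaining care is to choose $N$ so that $N\notin\mathfrak{p}$; then the product condition $NM^e\notin\mathfrak{p}$ also holds. For the rank claim we would treat $t$ as part of the base field $\mathbb{K}(t)$.
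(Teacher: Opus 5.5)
\textbf{Verdict.} Your proof is correct in substance, but the intermediate claim $M^e\mathfrak p\subseteq(F)$ is false and should be deleted. The overall skeleton matches the paper's:
\begin{itemize}
\item pass to the prime $\mathfrak p=\ker(g\mapsto g(t,\wh y(t)))$ of height $h$;
\item find $F\subseteq\mathfrak p$ of length $h$ with an $h\times h$ minor of $\partial_yF$ outside $\mathfrak p$;
\item neutralize the other components of $V(F)$ by an element that is nonzero at $\wh y$, hence at every sufficiently good approximation.
\end{itemize}
You carry out the middle steps differently.

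\textbf{How the two routes differ.} The paper takes a regular sequence $F$ from Mumford's complete-intersection proposition and gets a minor of the full Jacobian $DF$ from the absolute Jacobian criterion. It moves that minor into $\partial_yF$ via the chain-rule identity $\partial_tF(t,\wh y)=-\partial_yF(t,\wh y)\,\partial_t\wh y$. It then separates components by an element of $J\setminus I$ and concludes from $IJ\subseteq I^*\subseteq L$, where $L$ is the prime kernel of evaluation at the approximate solution.

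You instead select $F$ among generators of $\mathfrak p$ by the nonvanishing minor. This forces $(F)R_{\mathfrak p}=\mathfrak pR_{\mathfrak p}$, because the minor makes the $F_i$ independent in the $h$-dimensional space $\mathfrak pR_{\mathfrak p}/\mathfrak p^2R_{\mathfrak p}$. Hence $N\mathfrak p\subseteq(F)$ for some $N\notin\mathfrak p$, and the conclusion is a one-line cancellation in the domain $\mathbb{K}[[t]]$.

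\textbf{What each route buys.} Your ordering avoids the complete-intersection proposition and primary decomposition. It also guarantees that $\mathfrak p$ is a reduced component of $V(F)$, which the paper's ordering leaves implicit: for $\mathfrak p=(y)$, the regular sequence $F=y^2$ has $V(\mathfrak p)$ as a component but no minor outside $\mathfrak p$.

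The paper's ordering gives a cheaper passage to the relative Jacobian. Your transcendence-degree argument is loose as written:
\begin{itemize}
\item $\operatorname{Frac}(A)$ is not an extension of $\mathbb{K}(t,y)$;
\item the injection $A\hookrightarrow\mathbb{K}[[t]]$ only gives $\mathbb{K}(t)\subseteq\operatorname{Frac}(A)$;
\item the count $m+1-h$ needs $\operatorname{trdeg}_{\mathbb{K}}\operatorname{Frac}(A)=\dim A$, which you do not justify.
\end{itemize}
You could simply import the chain-rule identity above instead.

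\textbf{The false intermediate claim.} The statement $M^e\mathfrak p\subseteq(F)$ fails in general. Take $\mathfrak p=(y)$, $\wh y=0$, with generators $y(y-t),\,y$. The choice $F=y(y-t)$ is admissible, since $M=2y-t$ gives $M(t,0)=-t\neq0$. But $M^ey\in(y(y-t))$ would require $(2y-t)^e$ to vanish at $y=t$, i.e.\ $t^e=0$.

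Inverting $M$ does not remove the component $y=t$; only the extra factor $N$ does (here $N=y-t$). Once $N\mathfrak p\subseteq(F)$ is established, the factor $M^e$ is superfluous, so the argument you actually use goes through without it.
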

\begin{proof}
    Let $I$ be the ideal generated by $(f_1,\ldots,f_r)$ in $\mathbb{K}\langle t,y\rangle$. We may replace $I$ by the kernel of the substitution homomorphism $\mathbb{K}\langle t,y \rangle \rightarrow \mathbb{K}[[t]],$ sending $y$ to $\wh{y}(t)$. Therefore, we may assume that $I$ is prime. Let $s$ be the height of $I$. In other words, if $Y$ denotes the analytic germ $(\mathbb{K}^{n+m},0)$ and $X=V(I)$, then $s=\mathrm{codim}_Y(X)$, as $I$ being prime implies that all the components of $V(I)$ have the same dimension.
    We now use the following proposition:
    \begin{proposition}[\cite{Mumford} p.44 Cor.4]
        Let $U$ be an affine variety and $Z$ a closed irreducible subset and let $s=$codim$_U(Z)$. Then there exists a regular sequence $F_1,\ldots,F_s$ such that $Z$ is a component, of the complete intersection $V(F_1,\ldots,F_s)$.
    \end{proposition}
    \noindent We can therefore assume that $X$ is an irreducible component of a complete intersection variety $X^*$ and that there exists an ideal $I^*\subseteq I$ generated by a regular sequence $F=(F_1,\ldots,F_s)$ of length $s$ of elements of $I$ such that the variety defined by $I$ is an irreducible prime component of the variety defined by $I^*$. Let $J$ be the intersection of the other prime components of $I^*$. By the Jacobian Criterion (\cite{Mumford} p.168 Cor.1 to Prop. 2) there exists an $(s\times s)$-minor $M$ of the Jacobian $DF$ such that $M$ is not in $I^*$ meaning $M(t,\widehat{y}(t))\neq 0$. The partial derivative of $0=F(t,\widehat{y}(t))$ with respect to $t$ is $$0=\partial_{t}F(t,\widehat{y}(t))+\partial_{t}\widehat{y}\,\cdot\,\partial_{\tt{y}}F(t,\widehat{y}(t)),$$
equivalently 
$$\partial_{t}F(t,\widehat{y}(t))=-\partial_{t}\widehat{y}\,\cdot\,\partial_{\tt{y}}F(t,\widehat{y}(t)).$$
We may therefore assume that $M$ can be chosen within the relative Jacobian $\partial_{\tt{y}}F$ with respect to the $y$-variables. If $X=X^*$, we have already found the minor $M$ we were looking for.
\\Now assume that $X\neq X^*$ and $\widetilde{y}(t)$ is an algebraic solution of $F$ that approximates $\widehat{y}(t)$ up to a chosen degree. Our goal is to show that $\widetilde{y}(t)$ is also a solution of $f$ for all $f\in I$. As $X\neq X^*$, there is $h(t,y)\in K\langle t,y\rangle$ that does not vanish on $V(I)$ but on $V(J)$. Therefore, $h(t,\widehat{y}(t))\neq 0$ and as $\widetilde{y}(t)$ approximates $\widehat{y}(t)$ to a high degree, also $h(t,\widetilde{y}(t))\neq 0$.
\\Now consider the map $\Pi:\mathbb{K}\langle t,y \rangle \rightarrow \mathbb{K}\langle t\rangle,$sending $y$ to $\wh{y}(t)$ and let $L=ker(\Pi)$. Then $L$ is a prime ideal, as $K\langle t \rangle$ is an integral domain. However, as $\widetilde{y}(t)$ is a solution of $F_1,\ldots,F_s$ we get 
$$I*J\subseteq I\cap J =I^*\subseteq L .$$
As $h(t,\widetilde{y}(t))\neq 0$ we have $J\nsubseteq L$ and hence, as $L$ is Prime, $I\subseteq L$. Therefore, $f(t,\widetilde{y}(t))=0$ for all $f\in I$.
\end{proof}
\noindent Using this, we can now prove the univariate Artin Approximation Theorem:
\begin{proof}[Proof of Theorem \ref{ArtinAprox}]
    The idea is once again to reduce to a case where the Implicit Function Theorem can be applied. We may assume, using Lemma \ref{Reduction}, that there exists an $r\times r$-Minor $M$ of $\left(\frac{\partial f_i}{\partial y_j}\right)_{i,j}$ such that $M(t,\wh{y}(t))\neq0$. We denote by $y^\prime$ the set $\{y_j\mid M\; \mathrm{involves}\;\frac{\partial f}{\partial y_j},\;1\leq j\leq m\}$ and let $y^{\prime\prime}$ denote the set $\{y_j\mid y_j\not\in y^\prime,\;1\leq j\leq m\}.$ As $M(t,\wh{y}(t))$ is nonzero it is $t$-regular of order $d$ for some $d\in\mathbb{N}$. Hence, we can apply the formal Weierstrass division:
   \begin{equation}\begin{gathered}
\wh{y}_i(t) = v_i(t)\cdot M(t,\wh{y}(t)) + r_i(t) \quad \mathrm{for} \;\mathtt{y}_i\in \mathtt{y}^\prime,\\
\wh{y}_i(t) = v_i(t)\cdot M(t,\wh{y}(t))^2 + r_i(t) \quad \mathrm{for} \;\mathtt{y}_i\in \mathtt{y}^{\prime\prime},
\end{gathered}\end{equation}
for some formal power series $v_i\in\mathbb{K}[[t]]$ and $r_i(t)\in\mathbb{K}[t]_{\leq2d}$ for $1\leq i\leq m$. We will once again leave out the $t$ arguments of $r(t),\wh{y}(t)$ and $v(t)$ for the rest of the proof. Using Lemma \ref{ideale} the division can be rewritten as 
\begin{equation}
\begin{gathered}
    \wh{y}_i = \wh{a}_i(t)\cdot M(t,r) + r_i \quad \mathrm{for} \;\mathtt{y}_i\in \mathtt{y}^\prime,\\
\wh{y}_i = \wh{a}_i(t)\cdot M(t,r)^2 + r_i \quad \mathrm{for} \;\mathtt{y}_i\in \mathtt{y}^{\prime\prime},
\end{gathered}
\end{equation}
for some $\wh{a}(t)\in\mathbb{K}[[t]]^m$ for which we will leave out the $t$ in the future as well.
\\Let $\wh{a}^\prime$ be the vector of all $\wh{a}_i$ for which the corresponding $\mathtt{y}_i$ belongs to $ \mathtt{y}^\prime$ and let $\wh{a}^{\prime\prime}$ denote the vector of those $\wh{a}_i$ for which the corresponding $\mathtt{y}_i\in \mathtt{y}^{\prime\prime}$ for $1\leq i\leq m$. Furthermore, let $\wh{a}=(\wh{a}^\prime,M(t,r)\wh{a}^{\prime\prime})$. Now consider the Taylor expansion of $f(t,\wh{y})=f(t,\wh{a}\cdot M(t,r)+r)$:
\begin{equation}\label{2.1}
    f(t,r)+\wh{a}\cdot M(t,r)\cdot\partial_{\tt{y}}f(t,r)+M(t,r)^2\,\cdot\, q(t,\wh{a},M(t,r))=0,
\end{equation}
where $q(t,\omega,\kappa)$ is an algebraic power series in $t$, $\omega=(\omega^\prime,\omega^{\prime \prime})$ and $\kappa$ and $q$ is at least quadratic in $\omega$. As $M=\partial_{\tt{y}^\prime}f$ is a maximal minor of $\partial_{\tt{y}} f$, we can write the diagonal matrix $M\,\cdot\, \mathbb{1}_r$ as the product of $\partial_{\tt{y}^\prime}f$ and its adjoint matrix $\partial_{\tt{y}^\prime}^*f$. We can use this fact to get
\begin{equation}\label{2.2}
\begin{gathered}
M(t,r)^2\,\cdot\, q(t,\wh{a},M(t,r))=\\
M(t,r)\,\cdot\,\partial_{\tt{y}^\prime}f(t,r)\,\cdot\,\partial_{\tt{y}_\prime}^*f(t,r)\,\cdot\, q(t,\wh{a},M(t,r))
\end{gathered}
\end{equation}
and
\begin{equation}\label{2.3}
\begin{gathered}
\wh{a}\cdot M(t,r)\cdot\partial_{\tt{y}}f(t,r)=\\
M(t,r)\,\cdot\,[\sum_{\mathtt{y}_i\in\mathtt{y}^\prime}\partial_{\tt{y}_{i}}f(t,r)\,\cdot\, \wh{a}_{i}+\sum_{\mathtt{y}_i\in\mathtt{y}^{\prime\prime}}\partial_{\tt{y}_{i}}f(t,r)\,\cdot\,\wh{a}_{i}\,\cdot M(t,r)]=\\
M(t,r)\,\cdot\,\partial_{\tt{y}^\prime}f(t,r)\cdot[\wh{a}^\prime+\partial_{\tt{y}{\prime \prime}}f(t,r)\,\cdot\,\wh{a}^{\prime \prime}\,\cdot\,\partial_{\tt{y}_\prime}^*f(t,r)].
\end{gathered}
\end{equation}

\noindent Combine this with (\ref{2.1}) to get
\begin{equation}
f(t,r)+M(t,r)\,\cdot\,\partial_{\tt{y}^\prime}f(t,r)\,\cdot\,\Phi(t,\wh{a}^\prime,\wh{a}^{\prime\prime},r)=0,
\end{equation}
where \begin{equation}
\Phi(t,\omega^\prime,\omega^{\prime\prime},\kappa)=(\omega^\prime+\partial_{\tt{y}^\prime}^*f(t,\kappa)\,\cdot\, q(t,\omega^\prime,M(t,\kappa)\cdot \omega^{\prime\prime},M(t,\kappa))+\partial_{\tt{y}^\prime}^*f(t,\kappa)\,\cdot\,\partial_{\tt{y}^{\prime \prime}}f(t,\kappa)\,\cdot\,\omega^{\prime\prime}.
\end{equation}
\noindent Then $\Phi$ is algebraic, vanishing in zero, and $\partial_{\omega^\prime}\Phi(0,0,0,0)$ does not vanish and is therefore invertible.
Multitplication with $\partial_{\tt{y}^\prime}^*f$ yields that $\partial_{\tt{y}^\prime}^*f(t,r)\cdot f(t,r)$ is in the ideal generated by $M(t,r)^2.$ As both $f(t,y)$ and $M(t,y)$ are algebraic power series and $r$ is polynomial $\Phi(t,\wh{a}^\prime,\wh{a}^{\prime\prime},r)$ has to be an algebraic power series.
\\Now let $c\in\mathbb{N}$ be a natural number and let $a^{\prime\prime}(t)\in\mathbb{K}[t]_{\leq c}$ be the $c$-jet of $\wh{a}^{\prime\prime}$. We then consider the equation $$\Psi(t,\omega^\prime)=\Phi(t,\wh{a}^\prime,\wh{a}^{\prime\prime},r)-\Phi(t,\omega^\prime,a^{\prime\prime},r).$$

\noindent Then $\Psi(\omega,a^\prime)$ is an algebraic equation vanishing in zero satisfying $$\partial_{\omega^\prime}\Psi(0,0)=\partial_{\omega^\prime}\Phi(0,0,0,0)\neq0.$$
Thus, by the Implicit Function Theorem there exist $a^\prime(t)\in\mathbb{K}\langle t\rangle$ with $\Phi(t,\wh{a}^\prime,\wh{a}^{\prime\prime},r)=\Phi(t,a^\prime,a^{\prime\prime},r)$ and therefore $$f(t,r)+M(t,r)\,\cdot\,\partial_{\tt{y}^\prime}f(t,r)\,\cdot\,\Phi(t,a^\prime,a^{\prime\prime},r)=0.$$
Setting $a(t)=\left(a^\prime(t),M(t,r)a^{\prime\prime}(t)\right)$ and repeating (\ref{2.1}) and (\ref{2.2}) in reverse order yields  $$f(t,r)+a\cdot M(t,r)\cdot\partial_{\tt{y}}f(t,r)+M(t,r)^2\,\cdot\, q(t,a,M(t,r))=0.$$
This is the Taylor expansion of $f(t,a(t)\cdot M(t,r)+r)$. Thus, $y(t)\coloneqq a\cdot M(t,r)+r$ is a solution of $f$ that is algebraic as it is a sum and product of algebraic power series. Also by construction $y(t)\equiv\wh{y}(t)\bmod (t)^c$.
\end{proof}
\section{On generalizations of Theorem \ref{thm1}}\label{sec gen}
In this chapter, we will discuss the generalizations mentioned in the introduction. 
The assumption of a finite amount of solutions instead of a unique solution can be handled rather easily: As there are only finitely many nested solutions, there exists a $c\in\mathbb{N}$ such that $\wh{z}(t,x)\bmod(t,x)^{c+1}$ already determines $\wh{z}(t,x)$ as a unique solution.
\\The theorem of section \ref{sec2} can be generalized to systems of equations and vectors of $z$ variables:
\begin{theorem}[\cite{Denef1980}]
Let $P(t,x,y,z)\in \mathbb{C}\langle t,x,y,z\rangle^r$ be a system of
algebraic power series in two single variables $t,x$ and some multivariables $y=(y_0,\ldots,y_m),z=(z_1,\ldots,z_k)$. Assume given a formal power series solution $\widehat{y}(t),\wh{z}(t,x)$ vanishing at $0$,
$$P(t,x,\wh{y}(t),\wh{z}(t,x))= 0,$$
such that there exists a maximal minor $M(t,x,y,z)$ of $\frac{\partial P_i}{\partial z_j}$ with $$M(t,x,\widehat{y}(t),\widehat{z}(t,x)) \neq0\bmod (t).$$
Then there exists, for any $c \in \mathbb{N}$, an algebraic
power series solution $y(t)\in \mathbb{C}\langle t\rangle^{m+1},z(t,x)\in\mathbb{C}\langle t,x\rangle^k$,$$P(t,x, y(t)
,z(t,x)) = 0,$$
which coincides with $\wh{y}(t),\wh{z}(t,x)$ up to degree $c$,
$$y(t)\equiv \wh{y}(t) \: \bmod \:(t)^{c+1}$$
$$z(t,x)\equiv \wh{z}(t,x) \: \bmod \:(t,x)^{c+1}.$$
\end{theorem}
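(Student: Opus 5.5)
The plan follows the proof of Theorem \ref{easiest pop}, with the generalized implicit function theorem replaced by a multivariable version. The approximation step then goes through unchanged.

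\textbf{Step 1 (vector version of Theorem \ref{easier ''}).} Let $Q(t,x,z)\in\mathbb{K}[[t\vert x,z\rangle^r$ with $z=(z_1,\ldots,z_k)$, and let $\wh{z}(t,x)\in(t,x)\mathbb{K}[[t,x]]^k$ satisfy $Q(t,x,\wh z)=0$. Assume a maximal minor $M$ of $\partial_z Q$ satisfies $M(t,x,\wh z)\not\equiv 0 \bmod (t)$. The claim is that $\wh z\in\mathbb{K}[[t\vert x\rangle^k$.

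\begin{enumerate}
\item First pass to the case $r\le k$. Enlarging $k$ by dummy variables if necessary, keep only the $r'$ rows of $Q$ involved in $M$ and call this subsystem $Q'$. Solutions of $Q'$ are proven to lie in the ring, and the extra rows are not needed.
\item Split $z=(z',z'')$, where $z'$ are the $r'$ variables in the columns of $M$. Since $M(t,x,\wh z)$ is $x$-regular of some order $d$, Weierstrass-divide:
\begin{align*}
\wh z'&=\wh v'\,M(t,x,\wh z)+\wh r',\\
\wh z''&=\wh v''\,M(t,x,\wh z)^2+\wh r'',
\end{align*}
with remainders polynomial in $x$, hence in $\mathbb{K}[[t\vert x\rangle$.
\item By Lemma \ref{ideale}, replace $M(t,x,\wh z)$ by $M(t,x,\wh r)$, which is $x^d$ times a unit.
\item Taylor-expand $Q'(t,x,\wh r+\cdots)$ and use the adjugate identity $\partial_{z'}Q'\cdot(\partial_{z'}Q')^*=M\cdot\mathbb{1}$, exactly as in (\ref{2.1})--(\ref{2.3}). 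This gives
\[
Q'(t,x,\wh r)+M(t,x,\wh r)\,\partial_{z'}Q'(t,x,\wh r)\,\Phi(t,x,\wh a',\wh a'',\wh r)=0,
\]
where $\partial_{\omega'}\Phi(0)$ is invertible.
\item The $z''$-part $\wh a''$ is a formal series in $(t,x)$, so it is not automatically in the ring. Here the nested freedom must be used. Since $\wh a''$ is arbitrary, the plan is to keep $\wh a''$, but truncate it in $x$ to a polynomial $a''(t,x)$ with formal coefficients in $t$; such a polynomial lies in $\mathbb{K}[[t\vert x\rangle$. Then solve $\Phi(\omega',a'')=\Phi(\wh a',\wh a'')$ by the implicit function theorem in $\mathbb{K}[[t\vert x\rangle$.
\item The difficulty is that the right-hand side $\Phi(\wh a',\wh a'')$ must itself lie in the ring. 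Multiplying by the adjugate shows it is determined by $Q'(t,x,\wh r)/M^2$, which is in $\mathbb{K}[[t\vert x\rangle$ by Weierstrass division in that ring. This yields a solution $\tilde z$ in the ring with $\tilde z\equiv\wh z$ to any prescribed order.
\item The original $\wh z$ need not itself lie in the ring, since the solution set now has positive dimension. That is harmless: for the nested theorem it suffices to replace $\wh z$ by this new solution.
\end{enumerate}

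\textbf{Step 2.} Apply Step 1 to $Q(t,x,z)=P(t,x,\wh y(t),z)$, which is in $\mathbb{K}[[t\vert x,z\rangle^r$. This gives a formal solution $\tilde z=\sum_i h_i(\wh u(t))x^i$ with $h=\sum_i h_ix^i\in\mathbb{K}\langle u,x\rangle^k$, approximating $\wh z$ to order $c$.

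\textbf{Step 3.} Expand $P(t,x,y,h(u,x))=\sum_i H_i(t,y,u)x^i$ componentwise. By noetherianity of $\mathbb{K}\langle t,y,u\rangle$, reduce to finitely many $H_j$. Then apply Theorem \ref{ArtinAprox} to get algebraic $(y(t),u(t))$ agreeing with $(\wh y,\wh u)$ modulo $t^{c+1}$. Set $z=h(u(t),x)$; it is algebraic and approximates $\wh z$ modulo $(t,x)^{c+1}$, because $\wh u\in(t)$ and $h$ is fixed.

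The main obstacle is Step 1: making the non-minor variables $z''$ harmless and checking that the reduced equation for $\omega'$ has coefficients in $\mathbb{K}[[t\vert x\rangle$. The first thing I would try is the truncation of $\wh a''$ described in item 5, mirroring the $y''$ treatment in the proof of Theorem \ref{ArtinAprox}.
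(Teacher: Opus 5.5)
Your plan is essentially the paper's own argument. The paper also replaces the generalized implicit function theorem by an approximation statement in $\mathbb{K}[[t\vert x\rangle$: a solution in that ring agreeing with $\wh z$ to order $c$, proved like the univariate Artin approximation of Section \ref{artin section} (division by $M$ and $M^2$, the adjugate identity, truncating the $z''$-part, the implicit function theorem, Weierstrass division by $M^2$ in the ring), followed by elimination of $x$ and Theorem \ref{ArtinAprox} exactly as in the proof of Theorem \ref{easiest pop}. The only correction is to state Step 1 from the start as this approximation statement rather than as membership $\wh z\in\mathbb{K}[[t\vert x\rangle^k$, which fails in general when $r<k$, as your item 7 already concedes.
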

\noindent The proof is analogous to the proof in chapter \ref{sec2}. However, in this case the Generalized Implicit Function Theorem  is not enough. We replace it with an approximation theorem:
\begin{proposition}[\cite{Denef1980}]
  Let $Q(t,x,z)\in\mathbb{K}[[t\vert x,z\rangle^r$ with a formal power series  solution $\wh{z}(x)\in (t,x)\mathbb{K}[[t]][[x]]^k$ of $Q(t,x,\wh{z}(x))=0$ such that there exists a maximal minor $\widetilde{M}$ of $\frac{\partial Q_i}{\partial z_j}$ such that $\widetilde{M}(t,x,\wh{z}(t,x))\neq0\bmod (t).$ Then there exists for every $c\in \mathbb{N}$ a solution $z(x)\in \mathbb{K}[[t\vert x\rangle^k$ agreeing with $\wh{z}$ up to degree $c$.      
\end{proposition}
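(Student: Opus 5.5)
The plan is to transplant the proof of the univariate Artin Approximation Theorem (Theorem \ref{ArtinAprox}) into the ring $R=\mathbb{K}[[t\vert x\rangle$. The only differences are that Weierstrass division is now performed with respect to $x$ instead of $t$, and that the final step uses the Implicit Function Theorem in $R$ instead of in $\mathbb{K}\langle t\rangle$. Enlarging $s$ if necessary, we may assume that all coefficients of $Q$ are evaluations at one common tuple $\wh{u}(t)$, so everything lives in $R$.

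First, we may assume $r\le k$ and that $\widetilde M$ is the minor built from the first $r$ columns. Write $z=(z',z'')$ accordingly, so that $\widetilde M=\det\partial_{z'}Q$. The series $\widetilde M(t,x,\wh z)$ is not zero mod $(t)$, so it is $x$-regular of some order $d$. Perform formal Weierstrass division in $x$:
\[
\wh z_i=\wh v_i\,\widetilde M(t,x,\wh z)+\wh r_i \quad (z_i\in z'),\qquad
\wh z_i=\wh v_i\,\widetilde M(t,x,\wh z)^2+\wh r_i \quad (z_i\in z''),
\]
with remainders $\wh r_i\in\mathbb{K}[[t]][x]_{<2d}$. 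Each $\wh r_i$ is polynomial in $x$ with formal coefficients in $t$, so $\wh r_i\in R$. Applying Lemma \ref{ideale} (in the multivariable form of the statement), we rewrite this as $\wh z=\wh a\,\widetilde M(t,x,\wh r)+\wh r$, where the $z''$-entries carry an extra factor $\widetilde M(t,x,\wh r)$.

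Next, Taylor expand $Q(t,x,\wh z)=0$ around $\wh r$ exactly as in (\ref{2.1}). Then use the adjugate identity $\partial_{z'}Q\cdot\partial^*_{z'}Q=\widetilde M\cdot\mathbb 1_r$, as in (\ref{2.2}) and (\ref{2.3}). This gives
\[
Q(t,x,\wh r)+\widetilde M(t,x,\wh r)\,\partial_{z'}Q(t,x,\wh r)\,\Phi(t,x,\wh a',\wh a'',\wh r)=0 .
\]
Here $\Phi(t,x,\omega',\omega'',\kappa)=\omega'+(\text{terms of order}\ge 2 \text{ in } \omega)+\partial^*_{z'}Q\,\partial_{z''}Q\,\omega''$ is an element of $\mathbb{K}[[t\vert x,\omega,\kappa\rangle$, and $\partial_{\omega'}\Phi$ equals the identity at the origin.

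Now fix $c$. Let $a''$ be the $c$-jet of $\wh a''$ in $(t,x)$; it is a polynomial, so $a''\in R$. Consider the equation
\[
\Phi(t,x,\omega',a'',\wh r)=\Phi(t,x,\wh a',\wh a'',\wh r).
\]
The expected main obstacle is that the right-hand side involves the formal $\wh a$, so it is not a priori in $R$. To handle this, I would show that $\Phi(t,x,\wh a',\wh a'',\wh r)$ is determined inside $R$ by the displayed identity. Multiplying it by $\partial^*_{z'}Q(t,x,\wh r)$ gives
\[
\widetilde M(t,x,\wh r)^2\,\Phi=-\partial^*_{z'}Q(t,x,\wh r)\,Q(t,x,\wh r).
\]
The right-hand side lies in $R$, and $\widetilde M(t,x,\wh r)$ is $x$-regular. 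Since Weierstrass division holds in $R$ and the quotient of an exact division is unique, $\Phi(t,x,\wh a',\wh a'',\wh r)$ lies in $R$. With this settled, the Implicit Function Theorem in $R$ applies to $\omega'$ and yields a unique $a'\in R^r$. Uniqueness in $\mathbb{K}[[t,x]]$, together with the fact that $\wh a'$ would be a solution if $\wh a''$ replaced $a''$, controls the agreement: $a'\equiv\wh a'\bmod (t,x)^{c+1}$, since the two equations differ only in order $>c$.

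Finally, set $a=(a',\widetilde M(t,x,\wh r)a'')$ and $z=a\,\widetilde M(t,x,\wh r)+\wh r$, which lies in $R^k$. Reversing the Taylor and adjugate computations gives $Q(t,x,z)=0$, and by construction $z\equiv\wh z\bmod (t,x)^{c+1}$.
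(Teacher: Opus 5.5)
Your route is the one the paper intends: its whole proof is the remark that the Section 4 argument carries over to $\mathbb{K}[[t\vert x\rangle$ with Weierstrass division in $x$. Your exact-division argument that $\Phi(t,x,\wh a',\wh a'',\wh r)\in R$ settles a point the paper only asserts. However, two steps fail as written, and both are inherited from that template.

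First, dividing $\wh z'$ only by $\widetilde M(t,x,\wh z)$ does not give $(\widetilde M(t,x,\wh r))=(\widetilde M(t,x,\wh z))$. The factor $1-\wh v'\cdot\partial_{z'}\widetilde M(t,x,\wh z)+\widetilde M(t,x,\wh z)\,h$ produced by the proof of Lemma \ref{ideale} is a unit only when $\wh v'(0)\cdot\partial_{z'}\widetilde M(0)\neq 1$. For $r=k=1$, $Q=\tfrac12(z^2-x^2)$, $\wh z=x$ one gets $\widetilde M(t,x,\wh z)=x$, $\wh v=1$, $\wh r=0$, hence $\widetilde M(t,x,\wh r)=0$. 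Then the rewriting $\wh z=\wh a\,\widetilde M(t,x,\wh r)+\wh r$ is impossible. You also lose the $x$-regularity of $\widetilde M(t,x,\wh r)$, which your proof of $\Phi\in R$ uses.

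Second, even when the ideals agree, nothing forces $\wh a'(0)=0$. Yet you apply the Implicit Function Theorem at $\omega'=0$, and your uniqueness argument only compares solutions vanishing at the origin. In general the equation $\Phi(t,x,\omega',a'',\wh r)=\Phi(t,x,\wh a',\wh a'',\wh r)$ is then not even satisfied at $t=x=\omega'=0$. When it is, the theorem may pick the wrong branch. For $Q=\tfrac12z^2-xz$, $\wh z=2x$ one finds $\widetilde M(t,x,\wh z)=x$, $\wh r=0$, $\widetilde M(t,x,\wh r)=-x$, $\wh a=-2$, $\Phi(\omega)=\omega+\tfrac12\omega^2$ and $\Phi(\wh a)=0$. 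The Implicit Function Theorem returns $a=0$, i.e. $z=0$: a solution of $Q=0$, but not congruent to $\wh z$ modulo $(t,x)^2$.

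The repair is to divide every component, including those of $\wh z'$, by $\widetilde M(t,x,\wh z)^2$; the remainders have $x$-degree $<2d$ and are still in $R$. Then $\wh z-\wh r\in(\widetilde M(t,x,\wh z)^2)$, so Taylor expansion gives $\widetilde M(t,x,\wh r)=\widetilde M(t,x,\wh z)\,(1+\widetilde M(t,x,\wh z)\,h)$. The factor in parentheses is a genuine unit when $d\geq 1$. If $d=0$, the classical Implicit Function Theorem in $R$ solves for $z'$ directly once $z''$ is set to the $c$-jet of $\wh z''$.

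Hence $\widetilde M(t,x,\wh r)$ is $x$-regular of order $d$, and $\wh a'\in(\widetilde M(t,x,\wh r))$, so $\wh a'(0)=0$. Now your equation holds at $t=x=\omega'=0$, with $\omega'$-Jacobian equal to the identity there. Your comparison then becomes rigorous: $\Phi(a',\wh a'')-\Phi(\wh a',\wh a'')=\Phi(a',\wh a'')-\Phi(a',a'')\in(t,x)^{c+1}$. This equals a matrix invertible at the origin times $a'-\wh a'$, because both $a'$ and $\wh a'$ vanish at $0$.

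With this change the rest of your argument goes through: remainders in $R$, the adjugate identity, $\Phi\in R$ by exact Weierstrass division in $R$, and the reversal of the Taylor computation.
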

\noindent The proof of this proposition is essentially the same as the proof of the Artin Approximation Theorem given in chapter 4. 
\\Finally, we show that the assumption of $\wh{y}_i(t)$ being the coefficient of $x^i$ in the series expansion of $\wh{z}(t,x)$ for $0\leq i\leq m$ can be expressed in terms of $(m+1)$ new polynomial equations in $t,x,y,z$ and $w=(w_0,\ldots,w_m)$ where the Jacobi matrix with respect to the $\tt z$ and the $\tt w$ variable has as determinant exactly $\frac{\partial P}{\partial z}$.
\\We consider:
\begin{align}
\begin{split}\label{system}
    P(t,x,y,z)&=0,\\
    z-y_0+w_0&=0,\\
    z-y_0-x\cdot y_1+w_1&=0,\\
    &\;\;\vdots\\
    z-\sum_{i=0}^mx^i\cdot y_i+w_m&=0.         
\end{split}
\end{align}
We set $\wh{w}_j(t,x) \coloneqq -\wh{z}(t,x)+\sum_{i=0}^jx^i\cdot \wh{y}_i(t)$. By assumption $\wh{w}_j(t,x)\in x^{j+1}\mathbb{K}[[t,x]]$ for $j=0,\ldots,m$.
Now one gets approximating algebraic solutions $y(t),z(t,x)$ and $w(t,x)$
with $w_j(t,x)=-z(t,x)+\sum_{i=0}^jx^i\cdot y_i(t)$ for $0\leq j\leq m$. For $c\in\mathbb{N}\geq m$ we have $\wh{w}_j(t,x)\equiv w_j(t,x)\bmod (x)$ and thus $w_j(t,x)\in x^{j+1}\mathbb{K}[[t,x]]$. Thus, $y_j(t)$ is the coefficient of $x^j$ in the series expansion of $z(t,x)$ for $0\leq j\leq m$.
\section{Appendix}\label{apendix}
In this appendix, consider the ring of algebraic power series $\mathbb{K}\langle x \rangle$ in $x=(x_1,\ldots,x_n)$.
\\An element $h(x)$ of $\mathbb{K}\langle x \rangle$ is called $x_n$-regular of order $d$ if $h(0,\ldots,0,x_n)=x_n^d u(x_n)$, where $u(x)\in \mathbb{K}\langle x_n\rangle ^*$ is a unit. In other words, the monomial $x_n^d$ appears in the series expansion of $h(x)$ and $d$ is the smallest such degree. Then the Weierstrass Division Theorem can be stated as the following:
\begin{theorem}[Weierstrass Division Theorem]\label{WDT}
Let $f\in \mathbb{K}\langle x \rangle$ be $x_n$-regular of order $d$. Then for any $g\in \mathbb{K}\langle x \rangle$ there exist unique power series $h\in \mathbb{K}\langle x \rangle$ and $r_0,\ldots,r_{d-1} \in \mathbb{K}\langle x^\prime \rangle$ where $x^\prime=(x_1,\ldots,x_{n-1})$ such that 
$$g(x)=f(x)\cdot h(x)+\sum_{j=0}^{d-1} r_j(x^\prime)\cdot x_n^j.$$
\end{theorem}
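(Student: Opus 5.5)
The plan is to reduce the algebraic statement to two facts. The first is the formal Weierstrass division in $\mathbb{K}[[x]]$, which I take as classical; it gives existence and uniqueness of $h$ and $r_j$ as formal series. The second is the Implicit Function Theorem for algebraic power series. Uniqueness is then automatic: any algebraic $h,r_j$ are in particular a formal solution, hence the formal one. So the whole task is to show that the formal quotient and remainder are algebraic. To do this I would go through a \emph{generic division} by the universal monic polynomial
$$P(a,x_n)=x_n^d+a_{d-1}x_n^{d-1}+\cdots+a_0,$$
where $a=(a_0,\ldots,a_{d-1})$ are new indeterminates.

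\textbf{Step 1 (generic division).} For any $G\in\mathbb{K}\langle x\rangle$, divide $G(x',x_n)$ formally by $P(a,x_n)$ in $\mathbb{K}[[x',a,x_n]]$. This is legitimate since $P(0,x_n)=x_n^d$, so $P$ is $x_n$-regular of order $d$. It gives
$$G=P\cdot H+\sum_{j<d}R_j(x',a)\,x_n^j.$$
I claim that $R_j$ and $H$ are algebraic. Let $\xi_1,\ldots,\xi_d$ be the roots of $P$ in an algebraic closure of $\mathbb{K}(a)$. Over the ring $\mathbb{K}[[x',\xi]]$, in which $a$ is given by elementary symmetric functions of $\xi$ and each $\xi_i$ is integral over $\mathbb{K}[a]$, the remainder is given by Lagrange interpolation:
$$\sum_j R_jx_n^j=\sum_i G(x',\xi_i)\prod_{k\neq i}\frac{x_n-\xi_k}{\xi_i-\xi_k}.$$
From this formula the coefficients $R_j$ are symmetric expressions in the $\xi_i$ built from the algebraic series $G(x',\xi_i)$. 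Hence they are algebraic over $\mathbb{K}(x',a)$. They are also power series in $(x',a)$ by the formal division, so $R_j\in\mathbb{K}\langle x',a\rangle$. It follows that $H=(G-R)/P$ is an algebraic element of $\mathbb{K}(x',a,x_n)$-closure lying in $\mathbb{K}[[x',a,x_n]]$, hence $H\in\mathbb{K}\langle x',a,x_n\rangle$. Making this rigorous — passing from the $\xi$-ring back to $\mathbb{K}[[x',a]]$, and handling the division by the discriminant, which vanishes at $a=0$ — is the step I expect to be the main obstacle. I would handle it by working with the identity in the finite extension $\mathbb{K}[[x',a]][\xi]$ and invoking uniqueness of formal division there.

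\textbf{Step 2 (algebraic preparation).} Apply Step 1 to $G=f$, giving remainder coefficients $R_j(x',a)\in\mathbb{K}\langle x',a\rangle$. We look for $a=a(x')$ with $R(x',a(x'))=0$ and $a(0)=0$. At $x'=0,a=0$ one has $f(0,x_n)=x_n^d u(x_n)$, and the linear part of $R$ in $a$ is governed by the division of $x_n^d u$ by $x_n^d+\sum a_jx_n^j$. From this the Jacobian $\partial R/\partial a(0,0)$ is $-u(0)$ times a unipotent triangular matrix, hence invertible. The algebraic Implicit Function Theorem then yields $a(x')\in\mathbb{K}\langle x'\rangle^d$. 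Setting $p=P(a(x'),x_n)$, we get $f=p\cdot q$ with $q=H_f(x',a(x'),x_n)$ algebraic. Moreover $q$ is a unit, since $q(0,x_n)=u(x_n)$.

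\textbf{Step 3 (conclusion).} For arbitrary $g$, apply Step 1 to $G=g$ and substitute $a=a(x')$. This gives
$$g=p\cdot H_g(x',a(x'),x_n)+\sum_j R_j(x',a(x'))x_n^j.$$
Since $p=q^{-1}f$, we can take $h=q^{-1}H_g(\cdot,a(x'),\cdot)$ and $r_j=R_j(x',a(x'))$. Both are algebraic, because substitution of algebraic series into algebraic series and inversion of units preserve algebraicity. Uniqueness follows from the formal uniqueness as noted at the start.
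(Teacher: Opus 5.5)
Your argument is correct, and it does not follow the paper, which gives no proof of this statement. The paper only refers to Lafon. In its appendix it treats Weierstrass division as the primitive from which the Implicit Function Theorem is derived, and it adds that it knows of no proof in the opposite direction. Your proof is exactly such a converse. Formal division, the algebraic Implicit Function Theorem and the generic division by $x_n^d+a_{d-1}x_n^{d-1}+\cdots+a_0$ first give the algebraic preparation $f=p\cdot q$. Your Jacobian computation is right: $\partial R_j/\partial a_k(0,0)=-u_{j-k}$ for $j\ge k$ and $0$ otherwise, so the determinant is $(-u_0)^d$. Division by $f$ then follows by the substitution $a=a(x')$. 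What this buys is a short argument whose only non-formal inputs are the implicit function theorem and symmetric functions, and it yields preparation along the way. The price is that, to avoid circularity with the paper's appendix, the algebraic Implicit Function Theorem must be proved without Weierstrass division, e.g.\ by a transcendence-degree and Jacobian-criterion argument. The same applies to the closure of $\mathbb{K}\langle x\rangle$ under substitution that you use in Steps 2 and 3.

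The step you flag as the main obstacle is actually harmless. Formal division already puts $R_j$ in $\mathbb{K}[[x',a]]$, so you only need $R_j$ to be algebraic over $\mathbb{K}(x',a)$. Solve the Vandermonde system $G(x',\xi_i)=\sum_j R_j\,\xi_i^j$ in the fraction field of the domain $\mathbb{K}[[x',\xi]]$, where dividing by the Vandermonde determinant costs nothing. This shows the image of $R_j$ is algebraic over $\mathbb{K}(x',\xi)$, hence over $\mathbb{K}(x',a)$. The resulting polynomial relation pulls back to $\mathbb{K}[[x',a]]$ because the map $\mathbb{K}[[x',a]]\to\mathbb{K}[[x',\xi]]$, $a_j\mapsto(-1)^{d-j}e_{d-j}(\xi)$, is injective. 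To see this, compare weighted-homogeneous components and use the algebraic independence of the elementary symmetric polynomials. No uniqueness argument in $\mathbb{K}[[x',a]][\xi]$ is needed.
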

\noindent The proof goes back to \cite{Laf}.
\\The Weierstrass Division Theorem implies the Implicit Function Theorem and Inverse Function Theorem which are equivalent. We know of no proof of the Implicit function theorem implying the Weierstrass Division Theorem.
\begin{theorem}[Inverse Function Theorem]
    Let $f=(f_1,\ldots,f_n) \in \mathbb{K}\langle x \rangle^n$ such that $f(0)=0$ and the Jacobian matrix $\partial_xf$ of $f$ is invertible in $0$. Then there exists a unique $h\in \mathbb{K}\langle x \rangle^n$ vanishing at $0$ such that $h \circ f=f \circ h=\mathrm{Id}_{\mathbb{K}\langle x \rangle}$.
\end{theorem}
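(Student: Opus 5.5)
We reduce to the one-variable Implicit Function Theorem by enlarging the space. Given $f\in\mathbb{K}\langle x\rangle^n$ with $f(0)=0$ and $\partial_x f(0)$ invertible, introduce new variables $y=(y_1,\ldots,y_n)$ and set
$$F(y,x)\coloneqq f(x)-y\in\mathbb{K}\langle y,x\rangle^n.$$
Then $F(0,0)=0$ and $\partial_x F(0,0)=\partial_x f(0)$ is invertible. The Implicit Function Theorem for algebraic power series, which the paper says follows from Theorem \ref{WDT}, then gives a unique $h\in\mathbb{K}\langle y\rangle^n$ with $h(0)=0$ and $F(y,h(y))=0$. That is, $f\circ h=\mathrm{Id}$. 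Uniqueness of $h$ among formal series vanishing at $0$ comes from the formal implicit function theorem: the coefficients of $h$ are determined recursively degree by degree because $\partial_xf(0)$ is invertible. Since the Implicit Function Theorem is only quoted from Theorem \ref{WDT} and not proved, the algebraic case of it is what carries the weight, and I assume it from the appendix.

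For the other composition, note that $h(0)=0$, and differentiating $f\circ h=\mathrm{Id}$ at $0$ gives $\partial_xf(0)\cdot\partial_yh(0)=\mathbb{1}$. So $\partial_yh(0)$ is invertible. Applying the first step to $h$ produces some $g\in\mathbb{K}\langle x\rangle^n$ with $g(0)=0$ and $h\circ g=\mathrm{Id}$. Then
$$f=f\circ(h\circ g)=(f\circ h)\circ g=g,$$
so $h\circ f=h\circ g=\mathrm{Id}$. Composition is associative here because all the series involved vanish at $0$, so substitution is well defined and the algebraic power series form a category under substitution.

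For uniqueness, suppose $h'$ vanishes at $0$ and satisfies either identity. If $h'\circ f=\mathrm{Id}$, composing on the right with $h$ gives $h'=h'\circ f\circ h=h$. If instead $f\circ h'=\mathrm{Id}$, then $h'=h\circ f\circ h'=h$.

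The only non-formal point is that $h$ is algebraic rather than merely formal. This is exactly the content of the algebraic Implicit Function Theorem, and it rests on Weierstrass division in $\mathbb{K}\langle y,x\rangle$. I would make it explicit by induction on $n$. First apply a linear change of coordinates by $\partial_xf(0)^{-1}$, so that $F_n$ is $x_n$-regular of order $1$. Weierstrass division by $F_n$ then shows that the solution $x_n=\varphi(y,x_1,\ldots,x_{n-1})$ of $F_n=0$ is algebraic. Substituting $\varphi$ into the remaining $n-1$ equations reduces $n$ by one, and the induction proceeds.
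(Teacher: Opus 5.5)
Your proof is correct, and it follows the route the paper points to without writing it out. Weierstrass division gives the algebraic Implicit Function Theorem (your induction, dividing by one $x_n$-regular equation at a time after normalizing $\partial_x f(0)$ to the identity); applied to $f(x)-y$ it yields the right inverse $h$, which you correctly promote to a unique two-sided inverse by applying the same step to $h$ and using associativity of substitution. The paper only calls this Implicit/Inverse equivalence classical and credits the division-based argument to van den Essen, so your argument supplies the omitted details; the one ingredient you use silently is that $\mathbb{K}\langle\cdot\rangle$ is closed under substitution of algebraic series vanishing at $0$, which your induction step needs.
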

\begin{theorem}[Implicit Function Theorem]
  Let $g=(g_1,\ldots,g_m)\in \mathbb{K}\langle x,y \rangle^m$ be a vector of power series in $x=(x_1,\ldots,x_n)$ and $y=(y_1,\ldots,y_m)$ with $g(0,0)=0$ such that $\partial_yg$ is invertible at $0$. Then there exists a unique $h\in \mathbb{K}\langle x \rangle^m$ vanishing at $0$, such that $g(x,h(x))=0$.
\end{theorem}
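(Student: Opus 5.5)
The plan is to derive the Implicit Function Theorem from the Weierstrass Division Theorem \ref{WDT}, inducting on the number $m$ of equations. First we normalize. Let $A=\partial_y g(0,0)\in \mathrm{GL}_m(\mathbb{K})$. Replacing $g$ by $A^{-1}g$ changes neither the solution set nor algebraicity, so we may assume $\partial_y g(0,0)=\mathbb{1}_m$. Throughout we use that $\mathbb{K}\langle x,y\rangle$ is a ring, and that substituting algebraic series vanishing at $0$ into algebraic series again gives algebraic series. We take both facts as the standard closure properties already used in the paper.

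\emph{Case $m=1$.} Since $\partial_y g(0,0)=1$ and $g(0,0)=0$, we have $g(0,y)=y\cdot(1+O(y))$. Hence $g$ is $y$-regular of order $1$ as an element of $\mathbb{K}\langle x,y\rangle$, with $y$ playing the role of the distinguished last variable. Weierstrass division of $y$ by $g$ gives $y=g(x,y)\cdot q(x,y)+r(x)$ with $q\in\mathbb{K}\langle x,y\rangle$ and $r\in\mathbb{K}\langle x\rangle$. We compare terms of total degree $\le 1$ after setting $x=0$. The left side is $y$, while $g(0,y)q(0,y)=y\,q(0,0)+O(y^2)$. It follows that $r(0)=0$ and $q(0,0)=1$, so $q$ is a unit and
$$g(x,y)=q(x,y)^{-1}\bigl(y-r(x)\bigr).$$
Thus $h:=r\in\mathbb{K}\langle x\rangle$ satisfies $h(0)=0$ and $g(x,h(x))=0$. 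The factorization also gives uniqueness, even among formal series: if $g(x,\tilde h(x))=0$ with $\tilde h(0)=0$, then $q(x,\tilde h)$ is a unit of $\mathbb{K}[[x]]$, so $\tilde h=r$.

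\emph{Induction step.} Write $y=(y',y_m)$ with $y'=(y_1,\ldots,y_{m-1})$. Because $\partial_y g(0)=\mathbb{1}_m$, we have $\partial_{y_m}g_m(0)=1$. We apply the case $m=1$ to the single equation $g_m$, treating $(x,y')$ as the parameters. This yields an algebraic $\varphi(x,y')$ with $\varphi(0)=0$ and $g_m=\text{unit}\cdot(y_m-\varphi(x,y'))$. Differentiating $g_m(x,y',\varphi)=0$ at $0$ gives $\partial_{y'}\varphi(0)=0$. Now set
$$G_i(x,y')=g_i(x,y',\varphi(x,y'))\quad\text{for } i<m.$$
These are algebraic and vanish at $0$. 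By the chain rule their Jacobian at $0$ is $\partial_{y_j}g_i(0)+\partial_{y_m}g_i(0)\,\partial_{y_j}\varphi(0)=\delta_{ij}$. The induction hypothesis therefore gives a unique algebraic $h'(x)$ with $h'(0)=0$ and $G(x,h'(x))=0$. We then set $h=(h',\varphi(x,h'))$, which solves $g(x,h(x))=0$.

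\emph{Uniqueness in general.} Suppose $\tilde h$ is any solution vanishing at $0$. From $g_m(x,\tilde h)=0$ and the unit factorization we get $\tilde h_m=\varphi(x,\tilde h')$. Then $\tilde h'$ solves $G=0$, and the inductive uniqueness gives $\tilde h'=h'$.

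I expect the only delicate point to be the unit argument in the case $m=1$: one has to check that the quotient $q$ is invertible, i.e.\ $q(0,0)=1$, so that the zero set of $g$ is exactly the graph $y=r(x)$. The remaining steps are bookkeeping with the chain rule and the closure properties of algebraic power series.
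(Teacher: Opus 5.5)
Your proof is correct, but it takes a different route from the one the paper indicates. The paper gives no argument of its own for this statement. It only asserts that the Weierstrass Division Theorem yields the Inverse Function Theorem (crediting the idea to van den Essen), and that the Implicit Function Theorem then follows by the classical equivalence, e.g.\ by inverting $(x,y)\mapsto(x,g(x,y))$ and reading off $h(x)$ as the $y$-component of the inverse at $(x,0)$.

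You instead derive the Implicit Function Theorem directly from Theorem~\ref{WDT} by induction on $m$. Dividing $y$ by the order-one regular series $g$ gives the factorization $g=q^{-1}(y-r(x))$ with $q$ a unit. You then eliminate one unknown at a time through $y_m=\varphi(x,y')$, and check by the chain rule that the reduced system $G$ again has identity Jacobian.

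Your route needs only the single-divisor form of Weierstrass division stated in the appendix, plus closure of algebraic series under composition and under inverting units. Because of the unit factorization, it also gives uniqueness among formal solutions, not just algebraic ones. That is exactly the form of uniqueness the paper silently uses in the proof of the Generalized Implicit Function Theorem, where the formal $\wh{w}$ is identified with the solution produced by the Implicit Function Theorem. The paper's route treats the whole system in one stroke and produces the Inverse Function Theorem at the same time, but as written it is only a pointer to the literature rather than an argument.
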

\noindent The proof of their equivalence is classical, while the ideas of Weierstrass Division implying the Inverse Function Theorem goes back to \cite{doi:10.1080/00927879008824067}.
\fontsize{8}{9}\selectfont
\bibliographystyle{alpha}
\bibliography{references}

@book{Mumford,
  title     = "The Red Book of Varieties and Schemes",
  author    = "D. Mumford",
 series = "Lecture
Notes in Mathematics",
Volume = "1358",
  year      = "1999",
  publisher = "Springer"
}

@article{ARTIN1968,
author = {M. Artin},
journal = {Inventiones mathematicae},
pages = {277-291},
title = {On the Solutions of Analytic Equations.},
url = {http://eudml.org/doc/141922},
volume = {5},
year = {1968},
}

@article{popescu_1986, title={General {N}éron desingularization and approximation}, volume={104}, DOI={10.1017/S0027763000022698}, journal={Nagoya Mathematical Journal}, publisher={Cambridge University Press}, author={Popescu, D.}, year={1986}, pages={85–115}}

@article{Denef1980,
author = {Denef, J. and Lipshitz, L.},
journal = {Mathematische Annalen},
pages = {1-28},
title = {Ultraproducts and {A}pproximation in {L}ocal {R}ings. {II}},
url = {\url{http://eudml.org/doc/163462}},
volume = {253},
year = {1980},
}

@article{strongartin1,
author={Artin, M.},
title= {Algebraic approximation of structures over complete local rings.},
journal= {Publications Mathématiques de l'IHÉS},
volume ={36},
year={1969},
pages= {23-58},
url={\url{http://www.numdam.org/item/PMIHES_1969__36__23_0/}}
}

@article{Greenberg1966,
author = {Greenberg, M.},
journal = {Publications Mathématiques de l'IHÉS},
language = {eng},
pages = {59-64},
publisher = {Institut des Hautes Études Scientifiques},
title = {Rational points in henselian discrete valuation rings},
url = {http://eudml.org/doc/103871},
volume = {31},
year = {1966},
}

@article{Spivakovsky,
   title={A new proof of {D}. {P}opescu’s theorem
on smoothing of ring homomorphisms},
   volume={12},
   number={2},
   journal={{J}ournal of the American {M}athematical {S}ociety},
   author={M. Spivakovsky},
   year={1999},
   pages={381-444} }

@article {Laf,
  title={S\`{e}ries formelles alg\`{e}briques},
  author={Lafon J.-P.},
  journal={Comptes rendus de l’Académie des sciences},
  year={1965},
  volume={260},
  pages={3238-3241}
}

@article{doi:10.1080/00927879008824067,
author = {A. v. d. Essen},
title = {A criterion to decide if a polynomial map is invertible and to
compute the inverse},
journal = {Communications in Algebra},
volume = {18},
number = {10},
pages = {3183-3186},
year = {1990},
publisher = {Taylor & Francis},
doi = {10.1080/00927879008824067}
}

@article{bousquetmélou2005polynomialequationscatalyticvariable,
title = {Polynomial equations with one catalytic variable, algebraic series and map enumeration},
journal = {Journal of Combinatorial Theory, Series B},
volume = {96},
number = {5},
pages = {623-672},
year = {2006},
issn = {0095-8956},
doi = {https://doi.org/10.1016/j.jctb.2005.12.003},
url = {https://www.sciencedirect.com/science/article/pii/S0095895605001681},
author = {M. Bousquet-Mélou and A. Jehanne}
}

@article{swan,
    author = {R. Swan},
    title = {Néron-{P}opescu desingularization},
    journal ={Algebra and Geometry},
    year ={1995},
   adresse= {Taipei},
  note={Lect. Algebra Geom., vol. 2, Int. Press, Cambridge, 1998, pp. 135–192},
}

@article{OGOMA199457,
title = {General {N}éron {D}esingularization based on the idea of {P}opescu},
journal = {Journal of Algebra},
volume = {167},
number = {1},
pages = {57-84},
year = {1994},
issn = {0021-8693},
doi = {https://doi.org/10.1006/jabr.1994.1175},
url = {https://www.sciencedirect.com/science/article/pii/S0021869384711756},
author = {T. Ogoma}
}

@misc{stacks-projectpop,
  author       = {The {Stacks project authors}},
  title        = {The Stacks project},
 note = {tag 07BW},
  howpublished = {\url{https://stacks.math.columbia.edu/tag/07BW}},
  year         = {2024},
}

@incollection{SB_1993-1994__36__259_0,
     author = {Teissier, B.},
     title = {R\'esultats r\'ecents sur l'approximation des morphismes en alg\`ebre commutative},
     booktitle = {S\'eminaire Bourbaki : volume 1993/94, expos\'es 775-789},
     series = {Ast\'erisque},
     note = {talk:784},
     pages = {259--282},
     publisher = {Soci\'et\'e {M}ath\'ematique de France},
     number = {227,},
     year = {1995},
     mrnumber = {1321650},
     zbl = {0828.13003},
     language = {fr},
     url = {http://www.numdam.org/item/SB_1993-1994__36__259_0/}
}

@article{ZEILBERGER2000451,
title = {The {U}mbral {T}ransfer-{M}atrix {M}ethod. {I}. {F}oundations},
journal = {Journal of Combinatorial Theory, Series A},
volume = {91},
number = {1},
pages = {451-463},
year = {2000},
issn = {0097-3165},
doi = {https://doi.org/10.1006/jcta.2000.3110},
url = {https://www.sciencedirect.com/science/article/pii/S0097316500931108},
author = {D. Zeilberger}
}

@article{Hauser_Woblistin_2021, title={Arquile Varieties – Varieties Consisting of Power Series in a Single Variable}, volume={9}, DOI={10.1017/fms.2021.73}, journal={Forum of Mathematics, Sigma}, author={Hauser, H. and Woblistin, S.}, year={2021}, pages={e78}}

\quad {\scshape University of Vienna, Faculty of Mathematics, Oskar-Morgenstern-Platz 1, 1090, Vienna, Austria}
\\\textit{Email:} \href{gregor.pascal.boehm@univie.ac.at}{gregor.pascal.boehm@univie.ac.at} 
\end{document}